%last modified  4.09.2011
\documentclass[12pt,reqno]{amsart}
\usepackage{amsmath,amssymb,amsthm}
\usepackage[mathscr]{eucal}

\pagestyle{plain}
\begin{document}
\newtheorem{thm}{Theorem}
\numberwithin{thm}{section}
\newtheorem{lemma}[thm]{Lemma}
\newtheorem{remark}[thm]{Remark}
\newtheorem{corr}[thm]{Corollary}
\newtheorem{proposition}[thm]{Proposition}
\newtheorem{theorem}[thm]{Theorem}
\newtheorem{deff}[thm]{Definition}
\newtheorem{case}[thm]{Case}
\newtheorem{prop}[thm]{Proposition}
\numberwithin{equation}{section}
\newcommand{\uG}{\underline{G}}
\newcommand{\bD}{\mathrm{I\! D\!}}
\newcommand{\bR}{\mathbb R}
\newcommand{\uT}{\underline{T}}
\newcommand{\uB}{\underline{B}}
\newcommand{\uU}{\underline{U}}
\newcommand{\bH}{\mathrm{I\! H\!}}
\newcommand{\uA}{\underline{A}}
\newcommand{\uM}{\underline{M}}
\newcommand{\uN}{\underline{N}}
\newcommand{\uP}{\underline{P}}
\newcommand{\sN}{\cal N}
\newcommand{\bC}{\mathbb C}
\newtheorem{corollary}[thm]{Corollary}
\newtheorem{others}{Theorem}
\newcommand{\nucleo}{p_{D}_{t}(x,y)}
\newtheorem{conjecture}[thm]{Conjecture}
\newtheorem{definition}[thm]{Definition}
\newtheorem{cl}{Claim}
\newtheorem{cor}{Corollary}
\newcommand{\ds}{\displaystyle}
\date{}
\newcommand{\pa}{{\cal P}_{\alpha}}
\newcommand{\re}{{\rm Re}}
\newcommand{\im}{{\rm Im}}

\title[Conformal martingales]{Subordination by conformal  
martingales in $L^{p}$ and zeros of Laguerre polynomials}
\author{Alexander Borichev}\address{Alexander Borichev, 
Universit\'e de Provence, Marseille,\newline\noindent
{\tt borichev@cmi.univ-mrs.fr}}
\author{Prabhu Janakiraman}\address{Prabhu Janakiraman, 
Department of Mathematics, Michigan State University,
{\tt pjanakir1978@gmail.com}}
\author{Alexander Volberg}\address{Alexander Volberg, 
Department of Mathematics, Michigan State University,
{\tt volberg@math.msu.edu}}

\thanks{The research of the first author was partially supported 
by the ANR grants DYNOP and FRAB; the research of the second and 
the third authors was partially supported by the NSF grants 
DMS-0758552 and DMS-0605166.}

\begin{abstract} 
Given martingales $W$ and $Z$ such that $W$ is differentially subordinate to $Z$, 
Burkholder obtained the sharp martingale inequality $E|W|^p \le  (p^*-1)^pE|Z|^p$, where $p^* = \max\{p, \frac{p}{p-1}\}$. What happens if one of the martingales is also a conformal martingale? Ba\~nuelos and Janakiraman proved that if $p\geq 2$ and $W$ is a conformal martingale differentially subordinate to any martingale $Z$, then $E|W|^p \leq [(p^2-p)/2]^{p/2}E|Z|^p$. In this paper, we establish that if $p\geq 2$, $Z$ is conformal, and $W$ is any martingale subordinate to $Z$, then 
$\mathbb E|W|^p\le [\sqrt2(1-z_p)/z_p]^p\mathbb E|Z|^p$, where $z_p$ is the smallest positive zero of a certain solution of the Laguerre ODE. We also prove the sharpness 
of this estimate, and an analogous one in the 
dual case for $1<p<2$.
Finally, we give an application of our results.
Previous estimates on the $L^p$ norm of the Beurling--Ahlfors transform give at best $\|B\|_p\lesssim \sqrt{2}\,p$ as $p\rightarrow\infty$. We improve this to $\|B\|_p\lesssim 1.3922 \,p$ as $p\rightarrow\infty$.
\end{abstract}

\maketitle

\section{Introduction}

In this paper we address the question of finding the best 
$L^p$-norm constant for martingale transforms with one-sided 
conformality. Let $\mathcal O=(\Omega,\mathcal B,P)$ be a 
probability space with filtration $\mathcal{B}$ generated by a 
two-dimensional Brownian motion $B(t)$. Let 
$X(t)=\int_0^t \nabla X(s)\cdot dB(s)$ and 
$Y(t)=\int_0^t \nabla Y(s)\cdot 
dB(s)$ 
be two $\mathbb R^2$-valued martingales on this probability space, 
such that the quadratic variation of $Y$ runs slower than the 
quadratic variation of $X$, i.e. 
$d\langle Y\rangle(s)\le d\langle X\rangle(s)$, or equivalently
\begin{multline*}
|\nabla Y(s)|=\sqrt{|\nabla Y_1(s)|^2+|\nabla Y_2(s)|^2}\\ \le 
\sqrt{|\nabla X_1(s)|^2+|\nabla X_2(s)|^2}=|\nabla X(s)|,\qquad s\ge 0,
\end{multline*}
where $X=(X_1,X_2)$, $Y=(Y_1,Y_2)$.
By definition, $Y$ is said to be differentially subordinate to 
$X$ or to be a martingale transform of $X$. If, for $1<p<\infty$, 
we have $\mathbb E|X(t)|^p<\infty$, then the Burkholder--Davis--Gundy and 
Doob inequalities (see \cite{RoWi}) imply that $\mathbb E|Y(t)|^p<\infty$ 
and there exists a universal constant $C_p$ such that 
$\|Y(t)\|_p\le C_p\|X(t)\|_p$. We use the notation 
$\|X\|_p=\|X(t)\|_p=(\mathbb E|X(t)|^p)^{1/p}$. 
An evident problem then is to find the best 
constant $C_p$.

Burkholder solved this problem completely in a series of 
papers in the 1980's, see in particular \cite{Bu1} and 
\cite{Bu3}. He proved that 
$$%\begin{equation}\label{constant1}
C_p = p^*-1, \qquad p^*=\max\{p,p'\},\quad p'=\frac{p}{p-1}.
$$%\end{equation}
His approach (used also in the present paper) is as follows (see 
\cite[Section 5]{Bu1} for a more general viewpoint). Consider 
the function $V(x,y) = |y|^p-C_p^p|x|^p$, where $|{\cdot}|$ is the Euclidean norm in 
$\mathbb R^2$; we wish to find $C_p$ 
such that for martingales $X$ and $Y$ as above, we always have 
$\mathbb E\,V(X,Y)\le 0$. Now find (if it exists) a majorant 
function $U(x,y)\geq V(x,y)$ such that $U(0,0)=0$ and $U(X,Y)$ 
is a supermartingale; such a function must exist for the optimal 
$C_p$, see Section 2. Then we have
$$
\mathbb E\,V(X,Y)\le \mathbb E\,U(X,Y)\le 0.
$$
Burkholder shows that when $C_p=p^*-1$ such a majorant exists 
and equals
$$%\begin{equation}\label{U(x,y)}
U(x,y) = p\Bigl(1-\frac{1}{p^*}\Bigr)^{p-1}
\bigl(|y|-(p^*-1)|x|)(|x|+|y|\bigr)^{p-1},
$$%\end{equation}
and he finds extremal functions (extremals) to show that $p^*-1$ is in fact the best 
(least) possible constant. Generally, to show that $U(X,Y)$ is a 
supermartingale we need to verify that $U$ is a supersolution 
for a family of PDEs; in this case, it suffices to show that $U$ 
is a biconcave function. Thus Burkholder translates 
martingale $L^p$ problems to the calculus-of-varia\-tions setting 
and solves the corresponding obstacle problems. In other work 
\cite{Bu8}, he also shows that this martingale problem and its 
solutions are related to the special nature of the range space of 
the martingales, and obtains specific geometric characterization 
of all Banach spaces that have finite martingale-transform 
constant.

\section{Burkholder, Bellman and Beurling--Ahlfors}
\label{se2}

One of the primary applications for Burkholder's theorem has 
come in Fourier analysis in estimating the $L^p$ norm of the 
Beurling--Ahlfors transform.

The Beurling--Ahlfors transform is a singular integral operator acting on $L^p(\bC)$ and defined by  
$$%\begin{equation}\label{BA}
B\varphi(z) = \frac{1}{\pi}\textrm{p.v.}\int_{\bC}\frac{f(w)}{(z-w)^2}dm(w).
$$%\end{equation}
This self-adjoint operator arises naturally in the quasiconformal mapping theory  due to the way it relates the complex derivative operators. If $\partial = \frac{\partial_x - i \partial_y}{2}$ and $\bar{\partial} = \frac{\partial_x + i \partial_y}{2}$, then 
$$ 
B = \frac{\partial}{\bar{\partial}} = \frac{\partial^2}{\Delta}.
$$
An alternative representation in terms of the second order Riesz transforms \cite{Du} is particularly important for us:
$$%\begin{equation}\label{Beur_Riesz}
B = R_2^2 - R_1^2 -i 2 R_1R_2.
$$%\end{equation}
One of the fundamental open problems for this operator is the computation of its $L^p$ norm $\|B\|_p$. This question gains prominence due to the information it would yield regarding the Beltrami equation (see \cite{NV1}) and for the proof of the former Gehring-Reich conjecture (and presently Astala's theorem) \cite{Ast}. Presently the quest for $\|B\|_p$ attracts mathematicians in different areas of analysis and probability. It remains unsolved.

It is a conjecture by Iwaniec \cite{Iw} that the norm 
constant is $\|B\|_p = p^*-1$, the same constant as in 
Burkholder's theorem for martingales; by duality it is known that $\|B\|_p=\|B\|_{p'}$. The lower bound (first found by Lehto \cite{Le}) can be proved by finding a suitable sequence of functions $\{\varphi_j\}$ such that $\lim_{j\rightarrow\infty}\frac{\|B\varphi_j\|_p}{\|\varphi_j\|_p} = p^*-1$. The upper bound is still an open problem. Estimates have been obtained and gradually improved upon, relying on some critical theorems of Burkholder in the martingale theory, see 
\cite{Bu1,Bu2,BaWa1,NV1,BaMH}. 

The first major 
breakthrough in finding the connection between martingale 
estimates and the Beurling--Ahlfors operator came in 
\cite{BaWa1} where Ba\~nuelos and Wang show that if a function 
$f\in L^p({\mathbb{R}}^2)$ is extended harmonically as $U_f(x,t)$ to 
the upper half-space ${\mathbb{R}}^3_+$, then for the martingale $X_t=U_f(B_t)$ there exists a martingale transform $Y_t$ satisfying (essentially)
$$ 
X_\tau\approx f(x),\quad\mathbb E\,[Y_\tau|B_\tau=x]=Bf(x),\quad d\langle Y\rangle\le 16\,d\langle X\rangle.
$$
Here $B_t$ is 3-dimensional Brownian motion, $\tau$ is its exit 
time from ${\mathbb{R}}^3_+$, and the conditional expectation 
$\mathbb E\,[Y_\tau|B_\tau=x]$ is the average value of $Y_\tau$ 
over paths that exit at $x$. This then implies (essentially)
$$
\|Bf\|_p =\|\mathbb E\,[Y_\tau|B_\tau=x]\|_p\le \|Y_\tau\|_p 
\le 4(p^*-1)\|X_\tau\|_p \le 4(p^*-1)\|f\|_p.
$$ 
The first inequality follows from Jensen's and the second one follows from 
Burk\-holder's theorem. Thus we have $\|B\|_p \le 4(p^*-1)$.

In a series of papers starting in the late 1990's (\cite{NT,NV1,NTV8,DV1,PV,P}) it is shown that the martingale/obstacle problem 
treated by Burkholder fits within a general framework derived 
from Stochastic Control theory, which also works with other 
questions in harmonic analysis. Here again, a special function 
$\mathcal{B}$ called the Bellman function is found in relation 
to the problem, and it usually satisfies certain concavity and 
boundedness conditions. Burkholder's function is an 
example of a Bellman function. In fact, the Bellman function 
theory establishes that such a function $\mathcal{B}$ 
necessarily exists for the corresponding optimization problem, 
and its concavity and boundedness 
properties are sharp on the extremals. Using the Bellman function approach, Nazarov and 
Volberg \cite{NV1} obtain a better estimate $\|B\|_p\le 2
(p^*-1)$. We describe how this was done. Given $f\in L^p$ and 
$g\in L^{p'}$, denote their heat extensions to the upper 
half-space by $f$ and $g$ again; we can show that 
\begin{gather*}
\Bigl|\int_{\bC} Bf\cdot g\Bigr|= 
\Bigl|2\int_{\bR^3_+} (\partial_x+i\partial_y)f(\partial_x+i\partial_y)g\,dxdydt\Bigr| \\
\le 2\int_{\bR^3_+} (|\partial_xf||\partial_xg|+|\partial_yf||\partial_yg|+|\partial_xf||\partial_yg|+|\partial_yf||\partial_xg|)\,dxdydt.
\end{gather*}
We wish to bound this integral from above by 
$c(p^*-1)\|f\|_p\|g\|_{p'}$. However, we do not know how to 
integrate terms like $|\partial_xf||\partial_xg|$, so the idea 
is to find another function above it which can be integrated and 
whose integral has the required upper bound. Now construct 
(for $p>2$) the Bellman function $\mathcal{B}$ defined on the 
domain 
$$
D_p=\{(\xi,\eta,X,Y)\subset (\bR_+)^2\times (\bR_+)^2\times 
\bR_+\times \bR_+: X>|\xi|^p,Y>|\eta|^q\},
$$ 
that satisfies (essentially)
\begin{enumerate}
\item $0\le \mathcal B \le (p-1)X^{1/p}Y^{1/q}$, \\
\item $-\langle d^2\mathcal{B} \,d\xi,d\eta\rangle\ge 2|d\xi|\cdot|d\eta|.$
\end{enumerate}
The actual construction of (or an existence proof for) 
$\mathcal{B}$ involves taking supremum of appropriate functions 
over certain families of martingales, similar to how Burkholder 
defines his function in \cite{Bu1}. For more details on 
Bellman function constructions see \cite{NTV8}, \cite{VaVo}, 
\cite{VaVo2}.

Define $b:\bR^2\times\bR_+\rightarrow \bR_+$ by 
$b(x,t)=\mathcal{B}(f,g,|f|^p,|g|^p)$ where all input functions 
are the heat extensions. Let $v=(f,g,|f|^p,|g|^p)$. The 
boundedness condition on $\mathcal{B}$ implies
\begin{multline*}
4\pi R^2b(0,R^2) \le (p-1)\left(\int |f|^pe^{\frac{-|x|^2}{4R^2}}\right)^{1/p}\left(\int |g|^qe^{\frac{-|x|^2}{4R^2}}\right)^{1/q} \\ \rightarrow (p-1)\|f\|_p\|g\|_q.
\end{multline*}
Some clever analysis shows that $4\pi R^2b(0,R^2)$ is asymptotically (as $R\rightarrow\infty$) bounded below by
\[\int \left(\langle -d^2\mathcal{B}\,\partial_xv,\partial_xv\rangle
+\langle -d^2\mathcal{B}\,\partial_yv,\partial_yv\rangle\right).\]
By the concavity condition on $\mathcal{B}$, the latter expression is bounded below by
$$
\int_{\bR^3_+} (|\partial_xf||\partial_xg|+|\partial_yf||\partial_yg|+|\partial_xf||\partial_yg|+|\partial_yf||\partial_xg|)\,dxdydt.
$$
Thus we conclude that for $p\ge 2$ we have 
$|\int Bf\cdot g|\le 2(p-1)\|f\|_p\|g\|_q$. The result 
for $1<p<2$ follows by duality.

Following \cite{NV1}, Ba\~nuelos and M\'endez \cite{BaMH} redo the work done in \cite{BaWa1} but this time with heat extensions and space-time Brownian motion and also obtain $\|B\|_p\le 2(p^*-1)$.

\section{Conformal martingales and the Beurling--Ahlfors transform}

A complex-valued martingale $Y=Y_1+iY_2$ is said to be 
{\it conformal} if the quadratic variations of the coordinate martingales are equal and their mutual covariation is $0$:
$$
d\langle Y_1\rangle =d\langle Y_2\rangle,\quad 
d\langle Y_1,Y_2\rangle=0.
$$
In \cite{BaJ1}, Ba\~nuelos and Janakiraman make the observation 
that the martingale associated with the Beurling--Ahlfors 
transform is in fact a conformal one. They show that 
Burkholder's proof in \cite{Bu3} naturally accommodates for this 
property and leads to an improvement in the estimate of 
$\|B\|_p$.

\begin{theorem}\label{BurkBaJa} {\rm (}One-sided conformality 
treated by Burkholder's me\-thod{\rm )}
\begin{enumerate}
\item {\rm (}Left hand side conformality{\rm )} Suppose that 
$2\le p<\infty$. If $Y$ is a conformal martingale and $X$ is 
any martingale such that $d\langle Y\rangle\le d\langle X\rangle$, then 
$$%\begin{equation}\label{estBJ}
\|Y\|_p \le \sqrt{\frac{p^2-p}{2}}\|X\|_p.
$$%\end{equation}
\item {\rm (}Right hand side conformality{\rm )} Suppose that 
$1<p\le 2$. If $X$ is a conformal martingale and $Y$ is any 
martingale such that $d\langle Y\rangle\le d\langle X\rangle$, 
then 
$$%\begin{equation}\label{estBJ1}
\|Y\|_p \le \sqrt{\frac{2}{p^2-p}}\|X\|_p.
$$%\end{equation}
\end{enumerate}
\end{theorem}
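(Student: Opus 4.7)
The proof follows the Burkholder approach outlined in Section 1, adapted to exploit the one-sided conformality. For part (1), set $C_p=\sqrt{(p^2-p)/2}$ and $V(x,y)=|y|^p-C_p^p|x|^p$. I would look for a majorant $U\ge V$ with $U(0,0)\le 0$ such that $U(X_t,Y_t)$ is a supermartingale, since then
\[
\mathbb E|Y|^p-C_p^p\,\mathbb E|X|^p=\mathbb E\,V(X,Y)\le\mathbb E\,U(X,Y)\le U(0,0)\le 0,
\]
which is exactly the desired inequality.

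The decisive new ingredient is that conformality of $Y$ relaxes the concavity condition that $U$ must satisfy in the $y$-block. By It\^o's formula, the drift of $U(X,Y)$ is a time-integral of $\tfrac12\operatorname{tr}(D^2U\cdot Q)$, where $Q$ is the $4\times 4$ matrix of joint quadratic variations of $(X_1,X_2,Y_1,Y_2)$. Because $d\langle Y_1\rangle=d\langle Y_2\rangle$ and $d\langle Y_1,Y_2\rangle=0$, the $Y$-block of $Q$ is a positive multiple of the identity and its contribution to the drift collapses to $\tfrac12\Delta_yU$ times a nonnegative scalar. In place of the full matrix condition $U_{yy}\le 0$ needed in Burkholder's general setting, we therefore need only the scalar condition $\Delta_yU\le 0$, combined with the appropriate mixed and pure-$x$ conditions coming from the subordination $d\langle Y\rangle\le d\langle X\rangle$. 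It is this weakening that makes a constant smaller than $p-1$ attainable.

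Exploiting the joint rotation invariance (in $x$ and in $y$ separately) and the $p$-homogeneity of $V$, the plan is to search for $U$ of the form
\[
U(x,y)=|y|^p\,\Phi(|x|/|y|)
\]
for a profile $\Phi\colon[0,\infty)\to\mathbb R$. Substituting this ansatz into the relaxed pointwise supermartingale inequality reduces it to an ODE inequality for $\Phi$; the extremal ODE simultaneously determines $\Phi$ and pins down the constant $C_p=\sqrt{p(p-1)/2}$. The remaining tasks are then (i) to write down the reduced ODE carefully from the relaxed concavity condition, (ii) exhibit an explicit solution $\Phi$, (iii) verify globally that the resulting $U$ majorizes $V$ on all of $\mathbb R^2\times\mathbb R^2$, and (iv) check the mixed and pure-$x$ Hessian conditions away from the diagonal $|x|=|y|$. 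Step (iii) is the main technical obstacle, as the majorization must be established in every regime $|x|\gtrless|y|$, including the boundary behaviour as $|x|/|y|\to0$ and $\to\infty$.

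Part (2) is proved by the parallel construction in which the conformality hypothesis, now on $X$, relaxes the concavity requirement on the $x$-block of $D^2U$ to the scalar inequality $\Delta_xU\le 0$, while the stronger full Hessian condition is retained on the $Y$-block. Repeating the homogeneous-ansatz analysis with roles of $|x|$ and $|y|$ exchanged yields an ODE whose extremal solution gives the constant $\sqrt{2/(p^2-p)}$ for $1<p\le 2$; the rest of the argument is identical.
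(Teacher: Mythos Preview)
Your identification of the key mechanism is correct: conformality of $Y$ collapses the $y$-block of the It\^o drift to $\tfrac12\Delta_yU\cdot d\langle Y_1\rangle$, so the pointwise supermartingale inequality that $U$ must satisfy is genuinely weaker than Burkholder's biconcavity condition. That is exactly the observation credited to \cite{BaJ1}.

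However, what you have written is an outline, not a proof. All of the substantive work---writing down the reduced ODE, exhibiting $\Phi$, and verifying the global majorization $U\ge V$ and the remaining Hessian inequalities---is deferred to ``remaining tasks.'' More to the point, the route you sketch is not the one actually taken. The theorem's title (``treated by Burkholder's method'') and the paper's remark that in \cite{BaJ1} ``Burkholder's proof in \cite{Bu3} naturally accommodates for this property'' are meant literally: one does not build a new majorant by solving a new ODE, one reuses Burkholder's explicit function
\[
U_c(x,y)=\alpha_p\bigl(|y|-c|x|\bigr)\bigl(|x|+|y|\bigr)^{p-1}
\]
and simply rechecks the supermartingale inequality under the relaxed (conformal) condition. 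The relaxation allows a smaller value of $c$ than $p-1$, and the computation shows that $c=\sqrt{p(p-1)/2}$ suffices. The majorization $U_c\ge V_c$ and the regularity of $U_c$ are already established in Burkholder's work, so none of your steps (ii)--(iv) need to be redone from scratch. Your homogeneous ansatz $U=|y|^p\Phi(|x|/|y|)$, besides being singular at $y=0$, is a different parametrization from the one used both by Burkholder and throughout this paper, where the natural variable is $r=|y|/(|x|+|y|)\in[0,1]$ and $U(x,y)=(|x|+|y|)^p g(r)$.

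Finally, your claim that the extremal ODE ``pins down the constant $C_p=\sqrt{p(p-1)/2}$'' is misleading. The paper explicitly states after Theorem~\ref{BurkBaJa} that ``it is not known whether these estimates are optimal.'' The constant $\sqrt{p(p-1)/2}$ is the best one obtainable \emph{from Burkholder's function}, not the best constant for the martingale problem; indeed, the whole point of Theorem~\ref{MAINThm} is to find the genuinely sharp constant in the dual case by abandoning Burkholder's function in favour of the Laguerre construction.
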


It is not known whether these estimates are optimal.

The result for the right hand side conformality is actually stated in \cite{BJV}. It follows the same lines of 
proof as that for the left hand side conformality.  If $X$ and $Y$ are 
the martingales associated with $f$ and $Bf$ respectively, then 
$Y$ is conformal, $d\langle Y\rangle\le 4d\langle X\rangle$ and, 
hence, by (1) we obtain
\begin{equation}\label{Beurest}
\|Bf\|_p\le \sqrt{2(p^2-p)}\|f\|_p,\qquad p\ge 2.
\end{equation}
Interpolating between this estimate $\sqrt{2(p^2-p)}$ and the 
known one $\|B\|_2=1$, Ba\~nuelos and Janakiraman \cite{BaJ1} establish the 
present best published estimate:
\begin{equation}\label{best-est}
\|B\|_p\le 1.575 (p^*-1).
\end{equation}

At the end of the paper we prove a slightly better asymptotic estimate 
%\newline\noindent
(Theorem~\ref{main_thm}):
$$%\begin{equation}\label{est2}
\limsup_{p\to\infty}\frac{\|B\|_p}{p} \le 1.3922\,,\,\,\text{and}\,\, \|B\|_p \le 1.4\,p\,,\,\,\text{if}\,\, p\ge 1000\,.
$$%\end{equation}

\section{New Questions and Main Results}

Since $B$ is associated with the left hand side conformality and since we know that $\|B\|_p = \|B\|_{p'}$, two important questions are
\begin{enumerate}
\item If $2< p<\infty$, what is the best constant $C_p$ in the left hand side conformality problem: $\|Y\|_p\le C_p\|X\|_p$, where $Y$ is conformal and 
$d\langle Y\rangle \le d\langle X\rangle $?
\item Similarly, if $1<p'<2$, what is the best constant $C_{p'}$ in the left hand side conformality problem?
\end{enumerate}
We have separated these two questions since Burkholder's proof (and his function) 
already gives an improvement in the conformal case when $p\geq 2$. However, no 
estimate (better than $p-1$) follows from analyzing Burkholder's function when 
$1<p'<2$ in the conformal case. One could hope that $C_{p'}<\sqrt{\frac{p^2-p}{2}}$ 
when $1<p'=\frac{p}{p-1}<2$. This paper ``answers" this hope in the negative by 
finding $C_{p'}$; see Theorem~\ref{MAINThm}. We also pose and answer the 
analogous question of right hand side conformality when $2<p<\infty$. In the spirit of 
Burkholder \cite{Bu8}, we believe that these questions are of independent interest in 
the martingale theory and may have deeper connections with other areas of 
mathematics.

Given $p>1$, denote by $z_{p}$ is the least positive root in $(0,1)$ of the bounded Laguerre function $L_{p}$.

\begin{theorem}\label{MAINThm}Let $Y=(Y_1,Y_2)$ be a conformal martingale and $X=(X_1,X_2)$ be an arbitrary martingale.
\begin{enumerate}
\item Let $1<p'\le 2$. Suppose $d\langle Y\rangle\le d\langle X\rangle$. Then the best constant in the inequality $\|Y\|_{p'}\le C_{p'}\|X\|_{p'}$ is 
\begin{equation}\label{constp<2}
C_{p'} = \frac{1}{\sqrt{2}}\frac{z_{p'}}{1-z_{p'}}.
\end{equation}
\item Let $2\le p<\infty$. Suppose $d\langle X\rangle\le 
d\langle Y\rangle$. Then the best constant in the inequality $\|X\|_{p}\le C_{p}\|Y\|_{p}$ is 
\begin{equation}\label{constp>2}
C_{p} = \sqrt{2}\frac{1-z_{p}}{z_{p}}.
\end{equation}
%where $z_{p}$ is the least positive root in $(0,1)$ of the bounded Laguerre function $L_{p}$.
\end{enumerate}
\end{theorem}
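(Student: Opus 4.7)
The plan is to use Burkholder's obstacle-problem framework, adapted to one-sided conformality. For part~(1), one seeks a Bellman majorant $U:\mathbb{R}^2\times\mathbb{R}^2\to\mathbb{R}$ with $U(x,y)\ge V(x,y):=|y|^{p'}-C^{p'}|x|^{p'}$ everywhere, $U(0,0)=0$, and such that $U(X_t,Y_t)$ is a supermartingale for \emph{every} pair $(X,Y)$ satisfying the hypotheses. The smallest admissible $C$ is the sharp constant. By It\^o's formula, the supermartingale condition amounts to the pointwise matrix inequality
\[
\mathrm{tr}(a^T D^2_{xx}U\, a)+2\,\mathrm{tr}(a^T D^2_{xy}U\, b)+\mathrm{tr}(b^T D^2_{yy}U\, b)\le 0,
\]
required for all $a\in\mathbb{R}^{2\times 2}$ and for all \emph{conformal} $b\in\mathbb{R}^{2\times 2}$ (i.e.\ $b^Tb=\lambda I$ for some $\lambda\ge 0$) with $\mathrm{tr}(b^Tb)\le\mathrm{tr}(a^Ta)$.

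The $SO(2)\times SO(2)$ symmetry of hypotheses and obstacle lets us take $U=U(|x|,|y|)$, and a natural ansatz is the scaling form $U(x,y)=|x|^{p'}\Phi(|y|/|x|)$. Conformality of $b$ forces its contribution to the matrix inequality to act through $\Delta_y$ only, while the freedom in $a$ subject to the trace constraint means the worst case extremizes the positive part of the bilinear form. Performing both extremizations reduces the matrix inequality to a single second-order ODE for $\Phi$ in $t=|y|/|x|$. After a natural change of variables (essentially $s\sim t^2$, mirroring the radial two-dimensional Laplacian together with the $p$-power scaling), this ODE takes Laguerre form
\[
sF''(s)+(\alpha-s)F'(s)+p'F(s)=0
\]
for an explicit $\alpha$. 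The bounded solution on $[0,\infty)$ is, up to normalization, the Laguerre function $L_{p'}$, and the smooth-fit condition at the free boundary pins the first zero of the profile to the smallest positive root $z_{p'}$ of $L_{p'}$. Undoing the substitutions produces $C_{p'}=\tfrac{1}{\sqrt 2}\,z_{p'}/(1-z_{p'})$.

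Two verifications remain for the upper bound. First, that the candidate $U$ really is a supermartingale majorant \emph{globally}: the ODE delivers the matrix inequality inside the free region $\{|y|<z_{p'}^{1/2}|x|\}$, and outside one pastes $U$ to the obstacle (plus an affine correction) and checks both the inequality and $C^1$ smooth fit across the boundary. Second, sharpness: I would produce near-extremal pairs by taking $Y$ to be a time-changed planar Brownian motion stopped on exit from a disk whose radius encodes $z_{p'}$, and $X$ with $d\langle X\rangle=d\langle Y\rangle$ whose increments are aligned with the directions saturating the extremization above; It\^o's formula applied to $U$ then forces $\mathbb{E}\,V(X_\tau,Y_\tau)\to 0$, delivering the sharp ratio.

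The main obstacle I expect is the quantitative analysis of $L_p$ needed to certify all of this: one must show $z_p\in(0,1)$, that $L_p$ is strictly positive on $[0,z_p)$, and that the profile built from $L_p$ yields the correct sign of the reduced Hessian of $U$ across the free-boundary gluing. Part~(2) is handled by running the same scheme after interchanging the subordination direction: the conformal martingale $Y$ now dominates $X$ in quadratic variation, which exchanges the roles of $a$ and $b$ in the matrix inequality and of $|x|,|y|$ in the scaling ansatz. The reduction again produces a Laguerre ODE, now with index $p\ge 2$, and the analogous smooth-fit analysis yields $C_p=\sqrt 2(1-z_p)/z_p$.
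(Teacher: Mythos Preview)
Your high-level framework matches the paper: Burkholder's obstacle problem, radial reduction $U=U(|x|,|y|)$, homogeneity of degree $p$, and a Laguerre ODE governing the profile. But two concrete steps in your sketch would not go through as written.

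First, the reduction to a \emph{single} ODE via the change of variables $s\sim t^2$ is not correct. The paper uses the substitution $r=|y|/(|x|+|y|)$, writing $U(x,y)=(|x|+|y|)^p g(r)$; under this parametrization the condition $U_{xx}-2U_{xy}+U_{yy}+U_y/y\le 0$ becomes exactly the Laguerre inequality $\mathcal L_pg:=rg''+(1-r)g'+pg\le 0$, with no squaring and with $\alpha=1$ always. Your ansatz $U=|x|^{p'}\Phi(|y|/|x|)$ followed by $s\sim t^2$ does not yield Laguerre form. More importantly, the extremization over $a$ (non-conformal direction) does \emph{not} collapse to one ODE: already in the simplified case one obtains \emph{two} inequalities, $\mathcal L_pg\le 0$ and $\mathcal L_pg+4r\mathcal H_pg\le 0$ with $\mathcal H_pg=-r(1-r)g''+(p-1)(1-2r)g'+p(p-1)g$. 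The majorant is built by solving $\mathcal L_pg=0$ with equality on $[0,z_p]$ and then one must separately verify the second inequality via the sign of $\mathcal H_pL_p$ (which requires the convexity/concavity of $L_{p-1}$). In the fully general case (complex-valued $X$, strict subordination) further conditions arise, and showing they are vacuously satisfied by the Laguerre-based $U$ is a nontrivial additional step.

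Second, your sharpness plan via explicit stopped Brownian motions is different from, and harder than, what the paper does. Sharpness there is purely analytic: a Wronskian argument shows that \emph{any} $C^1$ supersolution $f$ of the Laguerre inequality on $[0,1]$ must satisfy $f(z_p)\le 0$. Since for $c$ strictly below the claimed constant the obstacle has $v_c(z_p)>0$, no admissible majorant can exist, and the Bellman-function formalism then rules out any smaller constant. Your extremal-martingale construction could in principle work, but you would have to exhibit pairs saturating not just the Laguerre condition but also the companion inequality involving $\mathcal H_p$, and this is not addressed. The smooth-fit step is likewise more delicate than ``pinning the first zero'': one must show the tangential contact of $aL_p$ with the obstacle occurs \emph{only} at $r=z_p$, which the paper does by proving a monotonicity property of a specific ratio $F(s)$ built from $L_p$ and $L_p'$.
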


The Laguerre function $L_p$ solves the ODE 
\[sL_p''(s)+(1-s)L_p'(s)+pL_p(s) = 0.\] These functions are discussed further on and their properties are reviewed in Section \ref{laguerresection}; see also \cite{BJV}, \cite{C}, \cite{CL}.

For asymptotics of $z_p$, $C_p$, $C_{p'}$ as $p\to\infty$ see 
Section~\ref{Asymptotics}. In particular, 
$\lim_{p\to\infty}C_{p'}/C_p>1$.

Before we embark on the proof of Theorem~\ref{MAINThm}, let us mention that there is also the question of two sided conformality: what is the best constant when both $X$ and $Y$ are conformal martingales? This problem is solved by the authors for $2<p<\infty$ in \cite{BJV} (and, recently, by 
Ba\~nuelos and Osekowski for $0<p<2$ in \cite{BO}), and the answer is $C_p = \frac{1+z_p}{1-z_p}$ where $z_p$ is the largest root in $[-1,1]$ of the Legendre function $F$ solving $(1-s^2)F''-2sF'+pF=0$. For large $p$ we have then $C_p<\sqrt{\frac{p^2-p}{2}}$.

\section{Proof of Theorem~\ref{MAINThm}: Right hand side conformality, $2<p<\infty$}

Let $X=(X_1,X_2)$ be an arbitrary martingale, and let $Y=(Y_1,Y_2)$ be a conformal martingale: 
$d\langle Y_1\rangle=d\langle Y_2\rangle$ and 
$d\langle Y_1,Y_2\rangle=0$. Let us assume that
$$
d\langle X\rangle \le d\langle Y_1\rangle= \frac{1}{2}
d\langle Y\rangle;
$$
then the constant corresponding to \eqref{constp>2} is
$$
\frac{1-z_p}{z_p}.
$$
Let $\tilde{V}(x,y)=|x|^p-c^p|y|^p$. Our objective is to find the best constant $c$ for which there exists a minimal majorant $\tilde{U}(x,y)\ge \tilde{V}(x,y)$, $\tilde{U}(0,0) \le 0$, such  that for $X$ and $Y$ as above, the process $\tilde{U}(X,Y)$ is a supermartingale. It follows then that $\mathbb E[\tilde{V}(X,Y)]\le  \mathbb E[\tilde{U}(X,Y)]\le 0$.
Furthermore, this supermartingality condition on $\tilde{U}(X,Y)$ is equivalent (by appealing to It$\hat{\textrm{o}}$'s formula) to the property that the quadratic form generated by $\tilde{U}$ 
is negative (in the distribution sense), i.e. 
\begin{equation}\label{cond-U1}
\sum_{i,j=1}^2 \tilde{U}_{x_ix_j}d\langle X_i,X_j\rangle+\Delta_y\tilde{U}d\langle Y_1\rangle+\sum_{i,j=1}^2 2\tilde{U}_{x_iy_j}d\langle X_i,Y_j\rangle\le 0.
\end{equation}
As in the Legendre case \cite{BJV}, the functions $\tilde{U}$, 
$\tilde{V}$ only depend on $|x|$ and $|y|$, hence 
\begin{gather*}
\tilde{U}(x_1,x_2,y_1,y_2)=U\Bigl(\sqrt{x_1^2+x_2^2},\sqrt{y_1^2+y_2^2}\Bigr)=
U(|x|,|y|),\\
\tilde{V}(x_1,x_2,y_1,y_2)=|x|^p-c^p|y|^p=V(|x|,|y|).
\end{gather*}
%Let $H_j$ be the stochastic integrand of $X_j$, $$
Let us introduce the vectors:
\begin{gather*}
%\label{h1h2}
h_1 :=\frac{(x_1,x_2)\cdot (\nabla X_1,\nabla X_2)}{|x|},\,
h_2 :=\frac{(-x_2,x_1)\cdot (\nabla X_1,\nabla X_2)}{|x|},\\
k:=\frac{(y_1,y_2)\cdot (\nabla Y_1,\nabla Y_2)}{|y|}\,.%\notag
\end{gather*}

It is an easy but important remark that because of orthogonality of $\nabla Y_1$ and $\nabla Y_2$ and 
the fact that $d\langle Y_1\rangle=d\langle Y_2\rangle$
we have
\begin{equation}
\label{kK1}
|k|^2=d\langle Y_1\rangle\,.
\end{equation}

Using the identities
\begin{align*}
\tilde{U}_{x_1x_2}&=U_{xx}\cdot \frac{x_1x_2}{|x|^2}-U_{x} \cdot\frac{x_1x_2}{|x|^3},
%\qquad 1\le i,j\le 2,\,i\not=j,
\\
\tilde{U}_{x_ix_i}&=U_{xx}\cdot \frac{x_i^2}{|x|^2}+U_{x} \cdot\frac{x_{3-i}^2}{|x|^3},
\qquad 1\le i\le 2,\\
\tilde{U}_{y_iy_i}&=U_{yy}\cdot \frac{y_i^2}{|y|^2}+U_{y} \cdot\frac{y_{3-i}^2}{|y|^3},
\qquad 1\le i\le 2,\\
\tilde{U}_{x_iy_j}&=U_{xy}\cdot \frac{x_iy_j}{|x|\cdot |y|},
\qquad 1\le i,j\le 2.
\end{align*}
and the property \eqref{kK1}, 
we can rewrite the condition \eqref{cond-U1} (for $x,y>0$) as
$$
U_{xx}|h_1|^2+\frac{U_x}{x}|h_2|^2 +2U_{xy}(h_1\cdot k) + (U_{yy}+\frac{U_y}{y})|k|^2 \le 0
$$
for all vectors $h_1$, $h_2$ and $k$ satisfying
\begin{equation}
\label{cone}
|h_1|^2+|h_2|^2 \le |k|^2,
\end{equation}
or, equivalently, as
\begin{equation}
\label{conditionU}
U_{xx}|h_1|^2+\frac{U_x}{x}|h_2|^2 +2|U_{xy}|\cdot |h_1|\cdot |k| + (U_{yy}+\frac{U_y}{y})|k|^2 \le 0.
\end{equation}

Denote $A=U_{xx}-\frac{U_x}{x}$, $B=U_{yy}+\frac{U_y}{y}$, and consider three cases:
\medskip 

\noindent Case (1): $A<0$ and $\beta_0:=|U_{xy}/A|\le 1$.
Let 
$$%\begin{equation}\label{seebeta}
\beta^2= \frac{|h_1|^2+|h_2|^2}{|k|^2}\,.
$$%\end{equation}

Then we can rewrite our expression \eqref{conditionU} (for $|k|>0$, which is the only interesting case) as
$$
A |k|^2\bigg[\bigg(\frac{|h_1|}{|k|} - \beta_0\bigg)^2 -\frac{U_{xy}^2 -A(\beta^2\frac{U_x}{x}+B)}{A^2}\bigg].
$$
To maximize this expression we need to minimize the expression in the square brackets. If
$$
\beta\in [\beta_0, 1],
$$
we can always choose $k,h_1,h_2$ such that
$$
\beta^2= \frac{|h_1|^2+|h_2|^2}{|k|^2},\qquad \frac{|h_1|}{|k|}=\beta_0,
$$ 
which minimizes the expression in the square brackets. 

If
$$
\beta\in [0,\beta_0),
$$
we should make $|h_1|/|k|$ as close as possible to $\beta_0$  under the restriction 
$$
\beta^2= \frac{|h_1|^2+|h_2|^2}{|k|^2}.
$$ 
The best we can do is to put $h_2=0$. 

Conclusion: in case (1) the negativity of the expression in \eqref{conditionU} under the  condition \eqref{cone} is equivalent to
\begin{align*}
U_{xy}^2 -A(\beta^2\frac{U_x}{x} + B)) &\le 0,\qquad \beta\in 
[\,|U_{xy}/A|,1]\,,\\
U_{xx}\beta^2 + 2|U_{xy}|\beta+B &\le 0,\qquad \beta\in (0,
|U_{xy}/A|\,]\,.
\end{align*}
\medskip

\noindent Case (2):  $A<0$ and $\beta_0>1$.
We still need the minimum for the expression in the brackets 
above. This means that we should make $|h_1|/|k|$ as close 
as possible to $\beta_0$ under the restriction $\beta^2= \frac
{|h_1|^2+|h_2|^2}{|k|^2}$. The best we can do is to put 
$h_2=0$. 

Conclusion: in case (2) the negativity of the expression in 
\eqref{conditionU} under the condition \eqref{cone} is 
equivalent to
\begin{equation*}
U_{xx}\beta^2 + 2|U_{xy}|\beta+B\le 0,\qquad \beta\in [0,1]\,.
\end{equation*}
\medskip

\noindent Case (3): $A\ge 0$. Our expression becomes
$$
A\|k\|^2\bigg[ \bigg(\frac{|h_1|}{|k|} + \beta_0\bigg)^2 \!-\frac{U_{xy}^2 -A(\beta^2\frac{U_x}{x} + B) }{A^2}\bigg].
$$
Now we maximize the expression in the brackets under the restriction 
$\beta^2= \frac{|h_1|^2+|h_2|^2}{|k|^2}$. The best we can do is to put $h_2=0$. 

Conclusion: in case (3) the negativity of the expression in \eqref{conditionU} under the condition \eqref{cone} is equivalent to
\begin{equation*}
U_{xx}\beta^2 + 2|U_{xy}|\beta+B \le 0,\qquad \beta\in [0,1]\,.
\end{equation*}

Now we see that condition \eqref{conditionU} can be split into the following two. 

For every $0\le \beta\le 1$,
if $A<0$ and $|U_{xy}|\le \beta |A|$, then 
\begin{equation}
\label{Ineq1}
U_{xy}^2 -A\Bigl(\beta^2\frac{U_x}{x} + B\Bigr) \le 0,
\end{equation}
otherwise,
\begin{equation}
\label{Ineq2}
U_{xx}\beta^2 + 2|U_{xy}|\beta+B \le 0.
\end{equation}
These two conditions are equivalent when $A<0$ and $|U_{xy}|=\beta |A|$. Let us look at \eqref{Ineq1} first. If $U_x<0$, then the maximum value is attained for the smallest possible $\beta$ which is $|U_{xy}/A|$, hence \eqref{Ineq1} is contained in \eqref{Ineq2} in this case. When $U_x\geq 0$, the maximum value is attained for the largest possible $\beta$ which is $1$. Thus, \eqref{Ineq1} can be replaced by 
\begin{equation}\label{U-condition1}
\begin{cases}
|U_{xy}|\le \frac{U_x}{x}-U_{xx} \\
\textrm{and } U_x >0
\end{cases}
\Rightarrow
\frac{U_{xy}^2}{\frac{U_x}{x}-U_{xx}} +\frac{U_x}{x} + (U_{yy}+\frac{U_y}{y}) \le 0.
\end{equation}
 
The left hand side of inequality \eqref{Ineq2} is the quadratic function in $\beta$:
$$
h(\beta)=U_{xx}\beta^2 + 2|U_{xy}|\beta+
\bigl(U_{yy}+\frac{U_y}{y}\bigr).
$$
%where $|U_{xy}|\ge 0$ and $0\le \beta\le 1$. 
If $U_{xx}\geq 0$, then the maximum on $[0,1]$ occurs at $\beta=1$. Suppose $U_{xx}<0$. Then the maximum on $[0,\infty)$ is at $\beta=\frac{-|U_{xy}|}{U_{xx}}\ge 0$. If $\frac{-|U_{xy}|}{U_{xx}}> 1$, then again the maximum of $h$ on $[0,1]$ is 
attained at $\beta=1$. If $\frac{-|U_{xy}|}{U_{xx}}\le 1$, then the maximum is at $\beta=\frac{-|U_{xy}|}{U_{xx}}$. Hence, inequality \eqref{Ineq2} is equivalent to the following
conditions:
\begin{gather}
\label{U-cond1}
\begin{cases}
U_{xx}\ge 0\textrm{ or } \\
-|U_{xy}|< U_{xx}< 0
\end{cases}
 \Rightarrow \begin{cases} 
U_{xx} + 2U_{xy} +(U_{yy} + \frac{U_y}{y}) \le 0 \\
U_{xx} - 2U_{xy} +(U_{yy} + \frac{U_y}{y}) \le 0, 
\end{cases}\\
\label{U-cond2}
U_{xx}\le -|U_{xy}| \le 0 \Rightarrow U_{xy}^2 - U_{xx}(U_{yy} + \frac{U_y}{y}) \le 0. 
\end{gather}
Thus, the expression in \eqref{conditionU} is negative under the condition \eqref{cone} if and only if the implications 
\eqref{U-condition1}, \eqref{U-cond1}, and \eqref{U-cond2} hold.

\section{A simplified setting: $X$ real, $\beta=1$}\label{simplecase}

In the previous section, we worked with the case when both 
\newline\noindent$\nabla X_1, \nabla X_2\in \mathbb R^2$ and $\beta\in [0,1]$. 
Let us assume now that $\nabla X_2=0$, $|\nabla X_1|=|\nabla Y_1|$; we can restrict ourselves to the case $x_2=0$. Then $|h_1|=|k|$, $h_2=0$, and condition 
\eqref{conditionU} reduces to
%If we instead take $\vec{x}$ to be real (hence $X$ is a real martingale), then we need only deal with condition \eqref{Ineq2} and hence with \eqref{U-cond1} and \eqref{U-cond2}. If we also assume that $\beta=1$, then $d\langle X\rangle=d\langle Y_1\rangle$ by \eqref{seebeta}, \eqref{kK1}, and  
%$\|h_1\|=\|k\|$  as $h_2=0$ by \eqref{h1h2} and by our assumptions that $x_2=0, H_2=0$.  Hence $U$ needs only to satisfy 
\begin{align}
U_{xx} + 2U_{xy} +\Bigl(U_{yy} + \frac{U_y}{y}\Bigr) &\le 0 \label{U-simple1}, \\
U_{xx} - 2U_{xy} +\Bigl(U_{yy} + \frac{U_y}{y}\Bigr) &\le 0 \label{U-simple2}.
\end{align}
In many similar situations (see \cite{Bu3}, \cite{BaJ1} and \cite{BJV}), the best majorant in the simplified setting is also the best one in the general case. Hence, we may hope for the same effect in our problem and look first for functions $U$ satisfying \eqref{U-simple1} and \eqref{U-simple2}. 
We will proceed as follows.
\begin{enumerate}
\item Use the homogeneity of $U(x,y)$ to reduce the partial differential inequalities to ordinary differential inequalities for a one variable function $g(r)$.
\item Assume that the optimal $U$ (and $g$) will  solve (with equality) one of the two differential equations, wherever it is above the boundary $V$. Then solve the easier looking equation, which will be the one with $-U_{xy}$.
\item We will find the smallest constant $c$ for which there exists a majorant satisfying \eqref{U-simple2}. It will turn out that $U_{xy}\le 0$ for this solution, and hence \eqref{U-simple1} holds as well. 
\end{enumerate}

\subsection{Homogeneity and reduction in variables}
The function $U$ satisfies the same homogeneity condition as $V$: for all $t\in\bR$,
\[ U(tx,ty) = t^p U(x,y).\] To see this, suppose that $U$ is a suitable majorant of $V$. Then $U_t(x,y) =\frac{1}{t^p}U(tx,ty)\geq \frac{1}{t^p}V(tx,ty)=V(x,y)$ is also a majorant and as can be easily checked, satisfies \eqref{U-simple1} and \eqref{U-simple2}. Therefore $U_t$ is also a suitable majorant for each $t>0$. Now take the infimum over all $t$ to get a suitable majorant satisfying the homogeneity condition.

Define
\begin{equation*}
g(r) = U(1-r,r), \hspace{2cm} 0\le r\le 1.
\end{equation*}
Then 
\begin{equation}\label{defU}
U(x,y) = (x+y)^pU\Bigl(1-\frac{y}{x+y},\frac{y}{x+y}\Bigr) = (x+y)^pg\Bigl(\frac{y}{x+y}\Bigr).
\end{equation}
Set
\begin{gather*}
\mathcal{L}_pg(r):=rg''(r)+(1-r)g'(r)+pg(r),\\
\mathcal H_pg(r):= -r(1-r)g''(r)+(p-1)(1-2r)g'(r)+p(p-1)g(r).
\end{gather*}

Substituting \eqref{defU} into \eqref{U-simple1} and \eqref{U-simple2} gives the following equivalent conditions on $g$:
\begin{gather}\label{g-cond1}
\mathcal{L}_pg(r)+4r\mathcal H_pg(r)\le 0,\\
\label{g-cond2}
\mathcal{L}_pg(r)\le 0.
\end{gather}
The operator $\mathcal{L}_p$ is the Laguerre operator, the equation $\mathcal{L}_pf=0$ is the Laguerre equation and its solutions are the Laguerre functions. The function $g$ should 
also majorize the obstacle function $v_c$: 
\begin{equation*}
g(r) \geq v_c(r)=(1-r)^p-c^pr^p.
\end{equation*}
Finally note that for $0\le r\le 1$,
\begin{equation}\label{U_xy}
U_{xy}(1-r,r)=\mathcal H_pg(r).
\end{equation}
Since $v(0)=1$ for all $c$, we have $g(0)\ge 1$. As $g$ is the minimal possible function, it is likely that it solves either 
$\mathcal{L}_pg=0$ or $\mathcal{L}_pg(r)+4r\mathcal H_pg(r)=0$ wherever $g>v$. We consider first the simpler equation $\mathcal{L}_pg=0$ and attempt to construct $g$ using its solutions.

\subsection{The Laguerre functions}\label{laguerresection}
Just as for the Legendre case in \cite{BJV}, solutions to the Laguerre equation
\begin{equation}\label{Lag}
 xy'' +(1-x)y'+py =0
\end{equation}
are linear combinations of two independent solutions $L_p$ and $\tilde{L}_p$.
\begin{align}
L_p(x) = & 1 -px+\frac{p(p-1)}{4}x^2\notag \\
&+ \ldots +(-1)^n \frac{p(p-1)\cdots (p-n+1)}{n!^2}x^n +\ldots\,, \label{f1}\\
\tilde{L}_p(x) = & L_p(x)\log\frac{1}{|x|} + H(x), \label{f2}
\end{align}
$H$ is analytic in a neighborhood of $0$. Evidently, $L_p(x)$ is a bounded analytic function in $[0,1]$ and $\tilde{L}_p$ is unbounded near $0$. Denote by $z_p$ the smallest zero of $L_p$ on the interval $[0,1]$.
 
\begin{lemma}\label{fplemma}
For every solution to the Laguerre equation \eqref{Lag}, its smallest zero in $[0,1]$ is at most $z_p$.
\end{lemma}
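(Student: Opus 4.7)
The plan is to decompose any solution of the Laguerre equation in the basis $\{L_p, \tilde L_p\}$ and to control the sign of the $\tilde L_p$-component at $x = z_p$ by a Wronskian computation.

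Write any solution as $f = \alpha L_p + \beta \tilde L_p$. If $\beta = 0$, then $f$ is a nonzero multiple of $L_p$, and its smallest zero in $[0,1]$ is exactly $z_p$; there is nothing to prove. Assume henceforth that $\beta \ne 0$. Using \eqref{f1} and \eqref{f2} together with $L_p(0) = 1$, the behaviour near the origin is $f(x) = \beta \log(1/x) + O(1)$ as $x \to 0^+$, so $f(x) \to +\infty$ if $\beta > 0$ and $f(x) \to -\infty$ if $\beta < 0$.

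The main computation is the Wronskian $W := L_p \tilde L_p' - L_p' \tilde L_p$. Recasting \eqref{Lag} in self-adjoint form $(xe^{-x} y')' + p e^{-x} y = 0$ and applying Abel's identity, $xe^{-x} W(x) \equiv C$ for some constant $C$. Differentiating \eqref{f2}, the contributions involving $\log(1/x)$ cancel out in $W$, leaving $W(x) = -L_p(x)^2/x + (\text{bounded near } 0)$; letting $x \to 0^+$ and using $L_p(0) = 1$ yields $C = -1$, so $W(x) = -e^x/x$. Evaluated at $z_p$, where $L_p(z_p) = 0$, this gives $-L_p'(z_p)\,\tilde L_p(z_p) = -e^{z_p}/z_p$. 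Because $z_p$ is an ordinary point of the ODE, $L_p(z_p) = 0$ forces $L_p'(z_p) \ne 0$ (otherwise $L_p \equiv 0$); the normalization $L_p(0) = 1 > 0$ and the minimality of $z_p$ then give $L_p'(z_p) < 0$. Hence $\tilde L_p(z_p) < 0$.

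Combining these facts, $f(z_p) = \beta \tilde L_p(z_p)$ carries the sign opposite to $\lim_{x \to 0^+} f(x)$, and the intermediate value theorem produces a zero of $f$ in $(0, z_p)$, completing the proof. The only mildly delicate step I anticipate is pinning down the correct sign of the Wronskian constant $C$ at the singular endpoint $x = 0$; thereafter all sign comparisons are forced.
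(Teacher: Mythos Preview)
Your proof is correct and follows essentially the same route as the paper's: decompose $f = \alpha L_p + \beta \tilde L_p$, use a Wronskian computation to show $\tilde L_p(z_p) < 0$, and apply the intermediate value theorem. The only cosmetic difference is that you compute the explicit formula $W(x) = -e^x/x$ via Abel's identity in self-adjoint form, whereas the paper simply observes that $W' = -\frac{1-x}{x}W$ forces $W$ to keep its (negative) sign on $(0,1]$.
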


\begin{proof}
Notice that $\tilde{L}_p(0) = +\infty$. Consider the Wronskian $W(x) = \tilde{L}_p'(x)L_p(x)-L_p'(x)\tilde{L}_p(x)$. 
By \eqref{f2}, we have
$$
W(x) = \frac{-L_p^2}{x}+H'L_p-L_p'H,
$$
which is strictly negative for $x$ close to $0$. Since $W'(x) =- \frac{1-x}{x}W(x)$, $W$ preserves sign in $[0,1]$ and is strictly negative. Since $L_p$ changes sign at $z_p$ 
from positive to negative, we have $L_p'(z_p)<0$ and, hence,  
$$
W(z_p) = -L_p'(z_p)\tilde{L}_p(z_p) = |L_p'(z_p)|\tilde{L}_p(z_p).
$$ 
Since $W<0$, it follows that $\tilde{L}_p(z_p)<0$. Now consider $f= c_1L_p+c_2\tilde{L}_p$ for $c_2>0$. Then $f(z_p)<0$ and $f(x)>0$ for $x$ close to $0$. Therefore $f$ has a zero in $(0, z_p)$. The same arguments work for $c_2<0$.
\end{proof}

\begin{lemma}\label{strictly-convex}
The function $L_p$ is strictly convex on $(0,z_p]$ for $1<p<\infty$; it is strictly concave on $(0,z_p]$ for $0<p<1$.
\end{lemma}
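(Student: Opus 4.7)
The approach is a two-step Sturm-type argument driven by the Laguerre ODE
\[
xL_p''(x)+(1-x)L_p'(x)+pL_p(x)=0
\]
and its derivative $xL_p'''(x)+(2-x)L_p''(x)+(p-1)L_p'(x)=0$. The plan is first to pin down the sign of $L_p'$ on $(0,z_p]$, and then to bootstrap that to control the sign of $L_p''$.

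For the first step, \eqref{f1} gives $L_p(0)=1$ and $L_p'(0)=-p<0$, while the Wronskian argument used in the proof of Lemma~\ref{fplemma} yields $L_p'(z_p)<0$ (the zero $z_p$ is simple, and $L_p$ moves from positive to negative there). Since $L_p(0)=1$ and $z_p$ is the smallest positive zero of $L_p$, continuity forces $L_p>0$ on $[0,z_p)$. If $L_p'$ vanished at some smallest point $x_0\in(0,z_p)$, the ODE at $x_0$ would give $x_0L_p''(x_0)=-pL_p(x_0)<0$, forcing $L_p''(x_0)<0$; but a $C^1$ function rising from strictly negative values on $(0,x_0)$ up to $0$ at $x_0$ must have nonnegative derivative at $x_0$. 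This contradiction shows $L_p'<0$ throughout $(0,z_p]$.

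For the second step, the series gives $L_p''(0)=p(p-1)/2$, strictly positive for $p>1$ and strictly negative for $0<p<1$. Inserting $L_p(z_p)=0$ into the ODE yields $L_p''(z_p)=(z_p-1)L_p'(z_p)/z_p$; since $L_p'(z_p)<0$, this sign is opposite to that of $z_p-1$. Thus $L_p''(z_p)>0$ for $p>1$ (where $z_p<1$) and $L_p''(z_p)<0$ for $0<p<1$ (where $z_p>1$). If $L_p''$ had an interior zero in $(0,z_p)$, the differentiated ODE evaluated at the smallest such $x_1$ would give $x_1L_p'''(x_1)=(1-p)L_p'(x_1)$; combined with $L_p'(x_1)<0$ from Step~1, this forces $L_p'''(x_1)>0$ when $p>1$ and $L_p'''(x_1)<0$ when $0<p<1$. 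In each regime the sign of $L_p'''(x_1)$ is inconsistent with $L_p''$ approaching $0$ from the expected side on $(0,x_1)$, so no such $x_1$ exists and $L_p''$ keeps its endpoint sign throughout $(0,z_p]$.

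The main obstacle is the easily overlooked fact that $z_p>1$ in the regime $0<p<1$, which is precisely what makes the endpoint sign $L_p''(z_p)<0$ come out right. This is not an ODE manipulation and needs its own short verification: it can be obtained by a direct perturbation argument from $p=1$ (where $L_1(x)=1-x$ and $z_1=1$), or by a Sturm comparison of the normal form of the Laguerre equation for $p<1$ against the elementary equation at $p=1$. Once this complement is established, the rest of the proof is a clean two-step bootstrap through the Laguerre equation and its first derivative.
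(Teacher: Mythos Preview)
Your core argument is correct and runs along the same lines as the paper's: both suppose $L_p''$ has a smallest zero $x_1\in(0,z_p)$, feed $x_1$ into the differentiated Laguerre equation $x_1L_p'''(x_1)+(p-1)L_p'(x_1)=0$, and obtain a sign for $L_p'''(x_1)$ incompatible with $L_p''$ reaching $0$ from one side. The one difference is in how the sign $L_p'(x_1)<0$ is secured. The paper reads it off the undifferentiated ODE at $x_1$: with $L_p''(x_1)=0$ and $L_p(x_1)>0$ one gets $(1-x_1)L_p'(x_1)<0$, and $x_1<z_p\le 1$ (recall $z_p$ is defined as the least zero in $[0,1]$) gives $L_p'(x_1)<0$. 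You instead insert a preliminary Step~1 proving $L_p'<0$ on the whole interval; that is the content of the paper's later Lemma~\ref{rootsorder}, there deduced \emph{from} convexity, so you have reversed the logical order in a self-contained way.

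Your endpoint computation and the ``main obstacle'' about $z_p>1$ for $0<p<1$ are, however, unnecessary. Thanks precisely to your Step~1, the sign of $L_p'(x_1)$ is known without any reference to where $x_1$ or $z_p$ sits relative to $1$, and the contradiction in Step~2 then goes through uniformly in $p$. You therefore need neither $z_p<1$ for $p>1$ (which would be circular within the paper's order of lemmas, since there $z_p<1$ is eventually obtained via Lemma~\ref{rootsorder}, which rests on the present lemma) nor $z_p>1$ for $0<p<1$. Strict convexity on the half-open interval $(0,z_p]$ already follows from $L_p''>0$ on the open interval $(0,z_p)$, so the separate evaluation of $L_p''(z_p)$ can simply be dropped.
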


\begin{proof} First consider the case $1<p<\infty$. Starting with the Laguerre equation and then differentiating it, we get
\begin{gather}
\label{f1cond}
 xL_p'' + (1-x)L_p' +pL_p =0,\\
\label{f2cond}
xL_p''' +(2-x)L_p''+(p-1)L_p' = 0.
\end{gather}
Then $L_p''(0)=\frac{p(p-1)}{2}>0$.
Let $x_1>0$ be the first positive point where $L_p''(x_1)=0$. Suppose that $x_1<z_p$. Then $L_p(x_1)>0$ and 
\eqref{f1cond} implies that $L_p'(x_1)<0$. Then \eqref{f2cond} yields $L_p'''(x_1)>0$ and so $L_p''$ is strictly increasing at $x_1$ which is not possible. Therefore $x_1>z_p$ and $L_p$ is strictly convex on $(0,z_p]$. A similar argument shows that $L_p$ is strictly concave on $(0,z_p]$ for $0<p<1$.
\end{proof}

\begin{lemma}\label{sL-L}
$(sL_p')' =-pL_{p-1}$, $sL_p'=p(L_p-L_{p-1})$.
\end{lemma}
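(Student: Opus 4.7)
The plan is to reduce both identities to one computation using the Laguerre ODE, and then verify that one identity by matching power series coefficients.

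First I would observe that the two stated identities are equivalent modulo the Laguerre equation. Indeed, differentiating gives $(sL_p')' = L_p' + sL_p''$, and substituting $sL_p'' = (s-1)L_p' - pL_p$ from \eqref{f1cond} yields
\begin{equation*}
(sL_p')' = sL_p' - pL_p .
\end{equation*}
Hence $(sL_p')' = -pL_{p-1}$ holds if and only if $sL_p' - pL_p = -pL_{p-1}$, i.e.\ $sL_p' = p(L_p - L_{p-1})$. It therefore suffices to establish either one of the two.

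Second, I would verify $(sL_p')' = -pL_{p-1}$ from the explicit series. Write $L_p(s) = \sum_{n\ge 0} a_n(p) s^n$ with $a_n(p) = (-1)^n p(p-1)\cdots(p-n+1)/(n!)^2$, as in \eqref{f1}. Then $sL_p'(s) = \sum_{n\ge 1} n\, a_n(p) s^n$ and $(sL_p')'(s) = \sum_{n\ge 0} (n+1)^2 a_{n+1}(p) s^n$. Matching this with $-p L_{p-1}(s) = -p\sum_{n\ge 0} a_n(p-1) s^n$ reduces the claim to the coefficient identity $(n+1)^2 a_{n+1}(p) = -p\, a_n(p-1)$. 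A short calculation shows both sides equal $(-1)^{n+1} p(p-1)\cdots(p-n)/(n!)^2$, which follows from the elementary polynomial identity $n\binom{p}{n} = p\binom{p-1}{n-1}$.

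The proof is essentially bookkeeping; the only place one must be slightly careful is in the shift $p \mapsto p-1$ appearing in the coefficients of $L_{p-1}$ and in the telescoping $p(p-1)\cdots(p-n+1)\cdot(p-n)$, but this is precisely the content of $n\binom{p}{n} = p\binom{p-1}{n-1}$ (viewed as an identity of polynomials in $p$, hence valid for all $p$, not just integers). No step should present a genuine obstacle.
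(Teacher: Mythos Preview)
Your proof is correct. The reduction of the two identities to one another via the Laguerre ODE is exactly what the paper does (in the reverse order). Where you diverge is in establishing $(sL_p')'=-pL_{p-1}$: you match power-series coefficients directly using the explicit expansion \eqref{f1}, whereas the paper argues structurally, showing that $(sL_p')'$ satisfies the Laguerre ODE with parameter $p-1$ (by differentiating, multiplying by $s$, and differentiating again) and then pinning down the scalar from the value at $0$. Your route is more elementary and entirely self-contained, at the cost of relying on the explicit series; the paper's route is coordinate-free but tacitly uses that the analytic-at-zero solution of the Laguerre ODE is unique up to scalar (the second solution $\tilde L_{p-1}$ being logarithmically singular). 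Either argument is perfectly adequate here.
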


\begin{proof}
Differentiating the Laguerre equation 
\begin{equation}\label{Lag1}
(sL_p')'-sL_p'+pL_p = 0
\end{equation}
gives us 
$$
(sL_p')''-(sL_p')'+pL_p' = 0.
$$
Multiply this by $s$ and differentiate again to get
\[ s(sL_p')'''+(1-s)(sL_p')''+(p-1)(sL_p')' = 0.\]
Thus, $(sL_p')'$ solves the Laguerre equation with constant 
$p-1$ and hence is a multiple of $L_{p-1}$. It remains to use that $(sL_p')'(0)=-p$.

To get the second identity just apply the Laguerre equation.
\end{proof}

From now on in this subsection we assume that $p>1$.

\begin{lemma}\label{rootsorder} We have $L'_p<0$ on $(0,z_p]$, $z_{p}<z_{p-1}$.
\end{lemma}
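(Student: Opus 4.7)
The plan is to prove the two assertions in sequence: first $L_p'<0$ on $(0,z_p]$, and then use this together with the shift identity from Lemma~\ref{sL-L} to push $z_{p-1}$ strictly to the right of $z_p$.

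For the first assertion I would combine three facts that are already available. The series expansion \eqref{f1} gives $L_p'(0)=-p<0$, and $L_p''(0)=p(p-1)/2>0$. By Lemma~\ref{strictly-convex}, $L_p''>0$ on $(0,z_p]$, so $L_p'$ is strictly increasing on $[0,z_p]$. It remains to check that $L_p'(z_p)<0$. Since $L_p(0)=1$, $L_p(z_p)=0$ and $L_p>0$ on $[0,z_p)$, we get $L_p'(z_p)\le 0$; equality is ruled out by uniqueness for the second-order ODE~\eqref{Lag}, because $L_p(z_p)=L_p'(z_p)=0$ would force $L_p\equiv 0$. (Alternatively, this already appears in the proof of Lemma~\ref{fplemma}.) Strict monotonicity of $L_p'$ then delivers $L_p'(s)\le L_p'(z_p)<0$ for every $s\in(0,z_p]$.

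For the second assertion I would feed the first one into the shift identity of Lemma~\ref{sL-L}, rewritten as
$$
L_{p-1}(s)=L_p(s)-\frac{sL_p'(s)}{p}.
$$
On $[0,z_p]$ both summands on the right are nonnegative: $L_p(s)\ge 0$ with equality only at $s=z_p$, and $-sL_p'(s)/p\ge 0$ with equality only at $s=0$ by the first assertion. Consequently $L_{p-1}(s)>0$ on all of $[0,z_p]$, which by the definition of $z_{p-1}$ as the smallest positive zero of $L_{p-1}$ in $[0,1]$ yields $z_{p-1}>z_p$.

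I do not foresee a substantial obstacle: once Lemmas~\ref{strictly-convex} and~\ref{sL-L} are in hand, the argument is essentially algebraic. The only point that warrants a moment of care is establishing the strict inequality $L_p'(z_p)<0$ rather than the weak one, which is handled by the ODE uniqueness remark above.
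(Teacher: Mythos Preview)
Your proof is correct and uses the same ingredients as the paper's --- Lemma~\ref{strictly-convex} for $L_p''>0$ and Lemma~\ref{sL-L} for the shift identity. The only difference is in the second step: you use the identity $L_{p-1}=L_p-\tfrac{s}{p}L_p'$ directly to show $L_{p-1}>0$ on $[0,z_p]$, whereas the paper evaluates $(sL_p')'=-pL_{p-1}$ at $s=z_{p-1}$, combines it with the Laguerre equation to get $z_{p-1}L_p'(z_{p-1})=pL_p(z_{p-1})$, and then argues by contradiction using $L_p'<0$ on $(0,z_p]$. Your route is slightly more direct; the paper's is a bit more indirect but equally short.
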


\begin{proof}
Since $z_{p-1}$ is the root of $L_{p-1}$, by Lemma~\ref{sL-L}, we have 
\newline\noindent
$(sL_p')'(z_{p-1})=0$. Then by \eqref{Lag1}, we get $-z_{p-1}L_p'(z_{p-1})+pL_p(z_{p-1})=0$. Since $L_p\ge 0$, $L''_p>0$ on $(0,z_p]$, we have
$L_p'<0$ on $(0,z_p]$, and it follows that $z_{p-1}>z_p$.
\end{proof}

The following results improves the assertion of 
Lemma~\ref{fplemma}. 

\begin{lemma}\label{onlyLp}
Let $f\in C^1[0,1]$ be a supersolution of the Laguerre equation,
$$
sf''(s)+(1-s)f'(s)+pf(s)\le 0
$$
in the sense of distributions. Then $f(z_p)\le 0$.
\end{lemma}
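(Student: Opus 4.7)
The plan is to run a Wronskian / Sturm--Picone-type comparison between the supersolution $f$ and the bounded Laguerre function $L_p$ on $[0,z_p]$, exploiting that $L_p\ge 0$ on this interval and $L_p(z_p)=0$. The first step is to put the Laguerre operator in self-adjoint form via the weight $e^{-s}$: a short calculation gives
$$
e^{-s}\mathcal{L}_p f(s)=\bigl(se^{-s}f'(s)\bigr)'+pe^{-s}f(s),
$$
so the distributional hypothesis reads $(se^{-s}f')'+pe^{-s}f\le 0$ in $\mathcal D'(0,1)$, while the same identity applied to $L_p$ holds with equality.

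Next I would introduce
$$
W(s):=se^{-s}\bigl(L_p(s)f'(s)-L_p'(s)f(s)\bigr),\qquad s\in[0,1].
$$
Because $f\in C^1[0,1]$, the function $W$ is continuous on $[0,1]$, and $W(0)=0$ by virtue of the factor $s$. The standard Wronskian computation---valid distributionally since only the smooth function $L_p$ multiplies the distribution $\mathcal L_p f$---gives
$$
W'=e^{-s}L_p\,\mathcal L_p f-e^{-s}f\,\mathcal L_p L_p=e^{-s}L_p\,\mathcal L_p f\quad\text{in }\mathcal D'(0,z_p).
$$
On $(0,z_p)$ one has $L_p>0$, so multiplying the nonpositive distribution $\mathcal L_p f$ by the nonnegative smooth factor $e^{-s}L_p$ yields a nonpositive distribution; hence $W'\le 0$ in $\mathcal D'(0,z_p)$. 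A continuous function with nonpositive distributional derivative on an interval is non-increasing, so $W(z_p)\le W(0)=0$. Since $L_p(z_p)=0$,
$$
W(z_p)=-z_pe^{-z_p}L_p'(z_p)\,f(z_p),
$$
and by Lemma~\ref{rootsorder} we have $L_p'(z_p)<0$; the coefficient of $f(z_p)$ is therefore strictly positive, and $f(z_p)\le 0$ follows.

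The only delicate point is reconciling the $C^1$ regularity of $f$ with the distributional Laguerre inequality, and this is exactly what the argument above accommodates: the auxiliary function $W$ is \emph{a priori} continuous, with nonpositive distributional derivative on $(0,z_p)$, and no pointwise second derivative of $f$---nor any additional regularity near the singular endpoint $s=0$---is ever required. The other essential ingredient is the strict negativity of $L_p'(z_p)$ supplied by Lemma~\ref{rootsorder}; everything else in the argument is routine.
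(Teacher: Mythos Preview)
Your proof is correct and is essentially the paper's own argument: both compare $f$ to $L_p$ via the Wronskian $sL_p'f-sf'L_p$, use $L_p\ge 0$ on $[0,z_p]$ together with the supersolution inequality to obtain a one-sided differential inequality, and read off the sign of $f(z_p)$ from $L_p'(z_p)<0$. The only cosmetic difference is that you build in the integrating factor $e^{-s}$ (the self-adjoint weight) from the outset, obtaining $W'\le 0$ directly, whereas the paper works with $T=-e^{s}W$ and derives $T'\ge T$, which amounts to the same thing.
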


\begin{proof} Set
$$
T(s)=sL_p'(s)f(s)-sf'(s)L_p(s).
$$
Then $T(0)=0$,
\begin{gather*}
T'(s)=L_p'(s)f(s)-f'(s)L_p(s)+sL_p''(s)f(s)-sf''(s)L_p(s)\\
\ge
L_p'(s)f(s)-f'(s)L_p(s)-(1-s)L_p'(s)f(s)-pL_p(s)f(s)
\\+(1-s)f'(s)L_p(s)+pf(s)L_p(s)= T(s).
\end{gather*}
Therefore,
$$
T(x)\ge 0, \qquad x\in [0,1],
$$
and hence
$$
0\le T(z_p)=z_pL_p'(z_p)f(z_p),
$$
and $f(z_p)\le 0$.
\end{proof}

\begin{lemma}\label{L-order}  We have
$L_p<L_{p-1}$, $L'_p<0$ on $(0,z_{p-1}]$, $L_p<0$ on $(z_p,z_{p-1}]$. Furthermore, $L_p$ has exactly one root in $[0,z_{p-1}]$.
\end{lemma}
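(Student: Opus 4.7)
The engine driving the entire lemma is the first identity of Lemma~\ref{sL-L}, namely $(sL_p'(s))' = -p L_{p-1}(s)$, which converts sign information about $L_{p-1}$ into monotonicity information about $s L_p'(s)$. The plan is to first use this to upgrade the estimate $L_p' < 0$ from the interval $(0, z_p]$ (given by Lemma~\ref{rootsorder}) to the larger interval $(0, z_{p-1}]$; every remaining assertion will then fall out essentially for free.

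First I would establish the key bound $L_p' < 0$ on $(0, z_{p-1}]$. By definition $z_{p-1}$ is the smallest zero of $L_{p-1}$ in $[0,1]$ and $L_{p-1}(0) = 1$, so $L_{p-1} > 0$ on $[0, z_{p-1})$. Hence $(sL_p'(s))' = -p L_{p-1}(s) < 0$ on $[0, z_{p-1})$, which means $s \mapsto s L_p'(s)$ is strictly decreasing on $[0, z_{p-1}]$. Since it vanishes at $s = 0$, it is strictly negative on $(0, z_{p-1}]$, and dividing by $s > 0$ gives $L_p'(s) < 0$ there.

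Next I would read off the other conclusions. The second identity in Lemma~\ref{sL-L} gives
\[
L_p(s) - L_{p-1}(s) = \frac{s L_p'(s)}{p},
\]
and the right-hand side is strictly negative on $(0, z_{p-1}]$ by the previous step, establishing $L_p < L_{p-1}$ on that interval. Since $L_p' < 0$ throughout $(0, z_{p-1}]$, the function $L_p$ is strictly decreasing there; combined with $L_p(0) = 1 > 0$ and $L_p(z_p) = 0$, this forces $L_p < 0$ on $(z_p, z_{p-1}]$ and shows $z_p$ is the unique zero of $L_p$ in $[0, z_{p-1}]$.

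I do not expect a serious obstacle: the whole argument rests on the single identity $(sL_p')' = -p L_{p-1}$, and the only point requiring care is verifying the sign of $sL_p'(s)$ at the boundary $s=0$, which follows immediately from the power series $L_p(x) = 1 - px + O(x^2)$ giving $(sL_p'(s))'|_{s=0} = L_p'(0) = -p < 0$, so monotonicity forces $s L_p'(s) < 0$ throughout $(0, z_{p-1}]$.
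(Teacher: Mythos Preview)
Your proof is correct and arguably cleaner than the paper's. Both arguments rest on Lemma~\ref{sL-L}, but they exploit the two identities there in different orders. You lead with the first identity $(sL_p')' = -pL_{p-1}$: since $L_{p-1}>0$ on $[0,z_{p-1})$, the function $s\mapsto sL_p'(s)$ is strictly decreasing from its value $0$ at the origin, giving $L_p'<0$ on the whole interval in one stroke; everything else follows immediately from strict monotonicity of $L_p$ and the second identity. The paper instead leads with the second identity $sL_p' = p(L_p - L_{p-1})$, noting the equivalence $L_p'(s)=0 \iff L_p(s)=L_{p-1}(s)$ for $s>0$, and then runs a contradiction argument: if $L_p$ were to become positive again somewhere in $(z_p,z_{p-1}]$, there would be a local minimum $y$ with $L_p(y)<0$ and $L_p'(y)=0$, forcing $L_{p-1}(y)=L_p(y)<0$, contradicting $L_{p-1}>0$ on $(0,z_{p-1})$. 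Your direct monotonicity argument avoids the case analysis entirely and makes the logical dependence transparent; the paper's route, while slightly more roundabout here, is the kind of argument that can generalize when one does not have a clean integrated quantity like $sL_p'$ with a known boundary value.
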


\begin{proof}
By Lemma~\ref{sL-L}, for $s>0$ we have $L_p(s)=L_{p-1}(s)$ if and only if $L_p'(s)=0$. 

Suppose that $L_p(x)=L_{p-1}(x)$ for some $x\in (0,z_{p-1})$. First of all, $L_p'<0$ on $(0,z_p]$. Therefore $x>z_p$. Next $L_{p-1}>0$ in $(z_p,z_{p-1})$ and $L_p<0$ in some interval $(z_p, z_p+\epsilon)$. If $L_p$ is positive 
at a point in $(z_p,z_{p-1}]$, then for some $y\in(z_p,z_{p-1}]$ we have $L_p(y)<0$, $L'_p(y)=0$,
and then $L_{p-1}(y)<0$ which is impossible.
\end{proof}

\begin{corollary} The function $L_p$ is convex in $(0,z_{p-1})$.
\end{corollary}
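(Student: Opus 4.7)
My plan is to split the interval $(0, z_{p-1})$ into the two pieces $(0, z_p]$ and $[z_p, z_{p-1})$ and handle each piece separately. On the first piece, I would simply invoke Lemma~\ref{strictly-convex}, which already asserts strict convexity of $L_p$ on $(0, z_p]$ for $p > 1$. So the real task is to establish $L_p'' > 0$ on $[z_p, z_{p-1})$.

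For the second piece, the idea is to read the Laguerre equation as a formula for $L_p''$:
$$
s L_p''(s) = -(1-s) L_p'(s) - p L_p(s),
$$
and determine the sign of the right-hand side by sign analysis of the two summands. By Lemma~\ref{L-order}, on $(0, z_{p-1}]$ we have $L_p' < 0$, and on $(z_p, z_{p-1}]$ we have $L_p < 0$ (with the boundary value $L_p(z_p) = 0$). Since by definition $z_{p-1} \in (0,1)$, the factor $1 - s$ is strictly positive throughout the interval. Hence $-(1-s) L_p'(s) > 0$ and $-p L_p(s) \ge 0$, which gives $s L_p''(s) > 0$; dividing by $s > 0$ yields $L_p''(s) > 0$ on $[z_p, z_{p-1})$.

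Concatenating the two pieces produces $L_p'' > 0$ on all of $(0, z_{p-1})$, i.e., strict convexity there (which in particular gives convexity as claimed). There is no real obstacle here — the proof is essentially careful bookkeeping of signs, once Lemmas~\ref{strictly-convex} and~\ref{L-order} are in hand. The only subtle point worth flagging is that the argument relies crucially on $1 - s > 0$ throughout the interval, i.e., on $z_{p-1} < 1$; this is built into the definition of $z_{p-1}$ as the least positive root of $L_{p-1}$ in $(0,1)$, so it is free.
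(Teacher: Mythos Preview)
Your proof is correct and uses the same ingredients as the paper: Lemma~\ref{strictly-convex} on $(0,z_p]$, and then the Laguerre equation together with the sign information from Lemma~\ref{L-order} on $[z_p,z_{p-1})$. The paper packages the second step as a proof by contradiction (assume $L_p''(x)=0$, deduce $L_p'(x)>0$ from the Laguerre equation and $L_p(x)<0$, contradict Lemma~\ref{L-order}); your direct sign computation of $sL_p''(s)=-(1-s)L_p'(s)-pL_p(s)$ is a slightly more streamlined version of the same argument.
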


\begin{proof}
Lemma~\ref{strictly-convex} gives this in $(0,z_p]$. Suppose that $L_p''(x)=0$ for some 
$x\in(z_p,z_{p-1})$. By the previous lemma, $L_p(x)<0$, and the Laguerre equation implies that $L_p'(x)>0$. Since $L_p'(0)<0$, there is a point $y\in (0,x)$ such that $L_p'(y)=0$. This contradicts the previous lemma.
\end{proof}

\begin{lemma}  $0<z_p<1$.
\end{lemma}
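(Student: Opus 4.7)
The lower bound $z_p > 0$ is immediate from $L_p(0) = 1$ (by the series \eqref{f1}) and continuity of $L_p$, so the actual content of the lemma is $z_p < 1$.

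The plan is to compare $L_p$ against the explicit bounded Laguerre function for parameter $1$, namely $L_1(s) = 1 - s$ (which one verifies directly, or reads off from \eqref{f1} at $p=1$). Writing the Laguerre equation in self-adjoint form $(se^{-s}u')' + \mu e^{-s}u = 0$ with $\mu \in \{p, 1\}$, I will introduce the cross-Wronskian
$$V(s) := se^{-s}\bigl(L_1'(s) L_p(s) - L_p'(s) L_1(s)\bigr),$$
which satisfies $V(0) = 0$. A routine Sturm--Liouville manipulation --- differentiate $V$, substitute both self-adjoint equations, and observe that the $se^{-s}L_1'L_p'$ cross terms cancel --- yields the clean identity
$$V'(s) = (p-1)\, e^{-s} L_p(s) L_1(s).$$

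Now I argue by contradiction. Suppose $z_p \ge 1$; then by continuity $L_p > 0$ on $[0,1)$ and $L_p(1) \ge 0$. Since $L_1(s) = 1-s > 0$ on $[0,1)$ and $p > 1$, the identity forces $V'(s) > 0$ on $(0,1)$, so integrating from $V(0) = 0$ yields $V(1) > 0$. On the other hand, $L_1(1) = 0$ and $L_1'(1) = -1$ give directly $V(1) = -e^{-1}L_p(1) \le 0$, a contradiction. Hence $L_p$ must vanish somewhere in $(0,1)$, and $z_p < 1$. I do not foresee a real obstacle: the only non-trivial step is the Sturm--Liouville computation of $V'$, which is entirely standard.
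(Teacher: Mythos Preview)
Your proof is correct and proceeds along a genuinely different route from the paper's. The paper first invokes Lemma~\ref{rootsorder} ($z_p<z_{p-1}$) to reduce to the range $1<p\le 2$, and then shows $L_p(1)<0$ for such $p$ by a bare-hands estimate on the power series \eqref{f1}. Your argument instead runs a Sturm comparison against the explicit solution $L_1(s)=1-s$: writing the Laguerre equation in self-adjoint form $(se^{-s}u')'+\mu e^{-s}u=0$, the cross-Wronskian $V(s)=se^{-s}(L_1'L_p-L_p'L_1)$ satisfies $V'=(p-1)e^{-s}L_pL_1$, and the contradiction $0<V(1)=-e^{-1}L_p(1)\le 0$ forces a zero of $L_p$ in $(0,1)$. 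This handles all $p>1$ in one stroke, avoids the somewhat delicate series manipulation, and is in the same spirit as the Wronskian arguments the paper itself uses in Lemmas~\ref{fplemma} and \ref{onlyLp}. The paper's approach, on the other hand, stays entirely within the chain of lemmas already built up and extracts the extra information $L_p(1)<0$ for $1<p\le 2$.
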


\begin{proof} By Lemma \ref{rootsorder}, it suffices to verify that $L_p(1)<0$, $1<p\le 2$.
By \eqref{f1}, we have
\begin{gather*}
L_p(1) = 1 -p+\frac{p(p-1)}{4}+ \ldots +(-1)^n \frac{p(p-1)\cdots (p-n+1)}{n!^2} +\ldots=\\
(p-1)\bigl(-1+\frac{p}{4}+ \frac{p(2-p)}{3!^2}
\ldots +  \frac{p(2-p)(3-p)\cdots (n-1-p)}{n!^2} +\ldots\bigr).
\end{gather*}
Since $p(2-p)\le 1$, $1<p\le 2$, we get 
$$
\frac{L_p(1)}{p-1}\le-1+\frac{1}{2}+ \frac{1}{3!^2}
\ldots +  \frac{(n-2)!}{n!^2} +\ldots<-1+e-2<0.
$$
\end{proof}

\begin{lemma}\label{est03}  $z_p\le 2/(p+1)$.
\end{lemma}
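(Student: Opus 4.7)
Since $L_p$ has exactly one zero $z_p$ in $(0,z_{p-1})$ (Lemma~\ref{L-order}) and $L_p(0)=1$, the claim $z_p\le 2/(p+1)$ is equivalent to $L_p(s_0)\le 0$, where $s_0:=2/(p+1)$. I will prove the latter via a Wronskian-type identity and a symmetrization.

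The test function is the linear polynomial $\phi(s):=1-(p+1)s/2$, chosen so that $\phi(s_0)=0$ and $s_0(p+1)/2=1$. Starting from the Sturm--Liouville form $(se^{-s}L_p')'=-pe^{-s}L_p$ and the elementary identity $(se^{-s}\phi')'=-(p+1)(1-s)e^{-s}/2$, I compute the derivative of the weighted Wronskian $se^{-s}(\phi L_p'-\phi'L_p)$; after simplification this derivative collapses to
\[
\frac{d}{ds}\bigl[se^{-s}(\phi L_p'-\phi'L_p)\bigr]=\frac{(p-1)e^{-s}}{2}\,L_p(s)\bigl[(p+1)s-1\bigr].
\]
Integrating from $0$ to $s_0$, noting that the boundary term at $s_0$ collapses to $e^{-s_0}L_p(s_0)$ because $\phi(s_0)=0$ and $s_0(p+1)/2=1$, and that the boundary term at $0$ vanishes, I obtain the identity
\[
L_p(s_0)=\frac{(p-1)e^{s_0}}{2}\int_0^{s_0}e^{-s}L_p(s)\bigl[(p+1)s-1\bigr]\,ds.
\]

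The key observation is that the weight $(p+1)s-1$ is antisymmetric about the midpoint $s_0/2=1/(p+1)$ of the integration interval. Setting $g(s):=e^{-s}L_p(s)$ and substituting $s=s_0/2+t$ rewrites the identity as
\[
L_p(s_0)=\frac{(p^2-1)e^{s_0}}{2}\int_0^{s_0/2}t\bigl[g(s_0/2+t)-g(s_0/2-t)\bigr]\,dt.
\]
For $p>1$ the prefactor is strictly positive, so it suffices to show that the integrand is non-positive.

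To finish, I argue by contradiction: assume $z_p>s_0$. Then $L_p>0$ and $L_p'<0$ on $(0,s_0]$ (Lemma~\ref{rootsorder}), whence
\[
g'(s)=e^{-s}\bigl(L_p'(s)-L_p(s)\bigr)<0\quad\text{on }(0,s_0),
\]
so $g$ is strictly decreasing on $(0,s_0)$. Consequently $g(s_0/2+t)<g(s_0/2-t)$ for every $t\in(0,s_0/2)$, the integrand in the displayed identity is strictly negative, and therefore $L_p(s_0)<0$. This contradicts $L_p(s_0)>0$, and so $z_p\le 2/(p+1)$.

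The main obstacle is getting the integration-by-parts identity right: the choice of $\phi$ must simultaneously ensure that (i) its single zero sits exactly at $s_0$, (ii) the boundary term at $s_0$ collapses cleanly via the relation $s_0(p+1)/2=1$, and (iii) the interior integrand becomes proportional to $(p+1)s-1$, which is antisymmetric about the midpoint of the integration interval. Once these three alignments are verified, the rest is a short monotonicity argument relying only on the facts $L_p>0$ and $L_p'<0$ on $(0,z_p)$ already established in this section.
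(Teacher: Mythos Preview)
Your proof is correct, but it is genuinely different from the paper's argument. The paper integrates the Laguerre equation $(sL_p')'=sL_p'-pL_p$ over $[0,z_p]$ and then uses the \emph{convexity} of $L_p$ on $[0,z_p]$ (Lemma~\ref{strictly-convex}) to bound both $\int_0^{z_p} sL_p'(s)\,ds$ and $\int_0^{z_p} L_p(s)\,ds$ via the tangent line at $z_p$; one line of algebra then yields $z_p\le 2/(p+1)$. Your route is a Sturm-type comparison: you pair the self-adjoint form $(se^{-s}L_p')'=-pe^{-s}L_p$ against the linear test function $\phi(s)=1-\tfrac{p+1}{2}s$, obtain an exact integral representation of $L_p(s_0)$, and then exploit the antisymmetry of the weight $(p+1)s-1$ about the midpoint together with the monotonicity of $e^{-s}L_p(s)$. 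The trade-offs: the paper's proof is shorter and more direct once convexity is in hand, whereas your argument uses only the weaker inputs $L_p>0$ and $L_p'<0$ on $(0,z_p)$ (Lemma~\ref{rootsorder}) and so is logically more economical---at the cost of having to discover the right test function. One small remark: your opening claim that ``$z_p\le s_0$ is equivalent to $L_p(s_0)\le 0$'' is stronger than what you actually need or use; the contradiction argument you give only requires the (immediate) implication $z_p>s_0\Rightarrow L_p(s_0)>0$, so you might simply drop the word ``equivalent''.
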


\begin{proof} By \eqref{Lag1}, we have 
$$
\int_0^{z_p}(sL_p'(s))'\,ds=\int_0^{z_p}sL_p'(s)\,ds-
p\int_0^{z_p}L_p(s)\,ds.
$$
By convexity of $L_p$ on $[0,z_p]$, we get 
\begin{gather*}
L_p'(s)\le L_p'(z_p),\qquad 0\le s\le z_p,\\
\int_0^{z_p}L_p(s)\,ds\ge -L_p'(z_p)\frac{z^2_p}{2},
\end{gather*}
and hence,
$$
z_pL_p'(z_p)\le \frac{z^2_p}{2}L_p'(z_p)+p\frac{z^2_p}{2}L_p'(z_p).
$$
It remains to use that $L_p'(z_p)<0$.
\end{proof}

\subsection{The function $\mathcal H_pL_p$}\label{hlp}
If  
$$
\mathcal{U}(x,y) = (x+y)^p L_p(\frac{y}{x+y}),
$$
then by \eqref{U_xy} we have
\begin{equation}
\label{H-U}
\mathcal H_pL_p(s) = \mathcal{U}_{xy}(1-s,s).
\end{equation}

\begin{lemma}\label{H-sL}
$\mathcal H_pL_p(s) = s(sL_p')'''$.
\end{lemma}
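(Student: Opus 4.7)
The plan is to reduce both sides of the identity to the same first-order expression in $L_p$ and $L_p'$, using the Laguerre equation consistently to eliminate every second derivative that appears.

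First I would simplify the left-hand side. Starting from the definition
$$
\mathcal{H}_p L_p(s) = -s(1-s) L_p''(s) + (p-1)(1-2s) L_p'(s) + p(p-1) L_p(s),
$$
I apply the Laguerre equation $sL_p''(s) = -(1-s)L_p'(s) - pL_p(s)$ to rewrite the $-s(1-s)L_p''$ term. Collecting coefficients of $L_p'$ and $L_p$ yields
$$
\mathcal{H}_p L_p(s) = \bigl[(1-s)^2 + (p-1)(1-2s)\bigr] L_p'(s) + p(p-s) L_p(s),
$$
and the bracketed coefficient equals $s^2 - 2ps + p$.

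Next I would tackle the right-hand side. From the first identity of Lemma~\ref{sL-L}, $(sL_p')' = -p L_{p-1}$, hence $(sL_p')''' = -p L_{p-1}''$, and therefore $s(sL_p')''' = -p s L_{p-1}''$. The Laguerre equation at parameter $p-1$ gives $sL_{p-1}''(s) = -(1-s)L_{p-1}'(s) - (p-1)L_{p-1}(s)$, so
$$
s(sL_p')''' = p(1-s) L_{p-1}'(s) + p(p-1) L_{p-1}(s).
$$
Now I convert back to $L_p$-quantities via the second identity of Lemma~\ref{sL-L}, which gives $L_{p-1} = L_p - sL_p'/p$. Differentiating and again eliminating $L_p''$ with the Laguerre equation at parameter $p$ produces $L_{p-1}'(s) = \frac{p-s}{p} L_p'(s) + L_p(s)$. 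Substituting these and collecting coefficients gives
$$
s(sL_p')''' = \bigl[(1-s)(p-s) - (p-1)s\bigr] L_p'(s) + p(p-s) L_p(s).
$$

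The proof then closes with the elementary identity
$$
(1-s)(p-s) - (p-1)s \;=\; (1-s)^2 + (p-1)(1-2s) \;=\; s^2 - 2ps + p,
$$
so the two reductions coincide. There is no genuine difficulty here; the only subtle point is keeping the bookkeeping honest, by eliminating second derivatives on \emph{both} sides via the Laguerre equation so that each side is finally expressed in the common basis $\{L_p, L_p'\}$.
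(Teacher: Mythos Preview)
Your argument is correct. Both sides are faithfully reduced to $(s^2-2ps+p)L_p'(s)+p(p-s)L_p(s)$, and every step checks out.

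The paper's route is a little different and worth knowing. Rather than detouring through $L_{p-1}$ via Lemma~\ref{sL-L}, it stays with $L_p$ throughout: it differentiates $(sL_p')'$ twice more directly, using the Laguerre equation each time, to reach $(sL_p')'''=-[(p-s)L_p''+(p-1)L_p']$; then it subtracts $(p-1)$ times the Laguerre equation from the defining expression for $\mathcal H_pL_p$ to obtain $\mathcal H_pL_p(s)=-s[(p-s)L_p''+(p-1)L_p']$, and the two expressions visibly match. So the paper compares in the basis $\{L_p',L_p''\}$ rather than $\{L_p,L_p'\}$, and never invokes $L_{p-1}$. Its computation is slightly shorter; your version, on the other hand, makes transparent why the connection to $L_{p-1}$ (and hence to Proposition~\ref{Hneg}) is natural.
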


\begin{proof} We start with the identities
\begin{align}
(sL_p')' &= sL_p''+L_p' = sL_p'-pL_p, \nonumber \\
(sL_p')'' &= (sL_p'-pL_p)' = sL_p''-(p-1)L_p' = -[(p-s)L_p'+pL_p], \nonumber \\
(sL_p')''' &= -[(p-s)L_p''+(p-1)L_p']. \label{SL'}
\end{align}
Since 
\begin{gather*}
\mathcal H_pL_p(s) = s(s-1)L_p''+(p-1)(1-2s)L_p'+p(p-1)L_p,\\ 
(p-1)[sL_p''+(1-s)L_p'+pL_p]=0, 
\end{gather*}
we get
$$
\mathcal H_pL_p(s) = -s[(p-s)L_p''+(p-1)L_p'].
$$
It remains to use \eqref{SL'}.
\end{proof}

\begin{proposition}\label{Hneg}
If $2< p<\infty$, then $\mathcal H_pL_p< 0$ on $(0,z_{p-1}]$. 
If $1<p<2$, then $\mathcal H_pL_p>0$ on $(0,z_{p-1}]$.
\end{proposition}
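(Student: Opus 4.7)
The plan is to reduce $\mathcal{H}_p L_p$ to a single second derivative of a Laguerre function of shifted index, and then read off the sign from the convexity/concavity results established earlier.

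Concretely, I would start from Lemma~\ref{H-sL}, which gives
\[
\mathcal{H}_pL_p(s)=s(sL_p')'''(s).
\]
Now apply Lemma~\ref{sL-L}: $(sL_p')'=-pL_{p-1}$. Differentiating this identity twice in $s$ yields $(sL_p')'''=-pL_{p-1}''$, hence
\[
\mathcal{H}_pL_p(s)=-p\,s\,L_{p-1}''(s),\qquad s\in(0,1).
\]
This is the only real computation; after it, the statement reduces to a question about the sign of $L_{p-1}''$ on $(0,z_{p-1}]$.

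To handle the sign, I would invoke Lemma~\ref{strictly-convex} applied to the index $p-1$ in place of $p$. If $p>2$, then $p-1>1$, so $L_{p-1}$ is strictly convex on $(0,z_{p-1}]$; inspecting the proof of that lemma, one actually gets $L_{p-1}''>0$ throughout that interval (since $L_{p-1}''(0)=(p-1)(p-2)/2>0$ and the argument rules out a first positive zero of $L_{p-1}''$ before $z_{p-1}$). Combined with the factor $-ps$, this gives $\mathcal{H}_pL_p<0$ on $(0,z_{p-1}]$, as claimed. If $1<p<2$, then $0<p-1<1$, and the same lemma gives strict concavity of $L_{p-1}$, i.e.\ $L_{p-1}''<0$ on $(0,z_{p-1}]$, so $\mathcal{H}_pL_p>0$ there.

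I do not anticipate a serious obstacle: the proposition is essentially a packaging of Lemmas~\ref{sL-L}, \ref{strictly-convex}, and \ref{H-sL}. The only point requiring a bit of care is making sure the convexity/concavity conclusion for $L_{p-1}$ is available on the whole closed interval $(0,z_{p-1}]$ and not just on its interior, so that strict (non-vanishing) sign of $L_{p-1}''$ is available at the right endpoint; this is why I would cite the proof of Lemma~\ref{strictly-convex} rather than its statement alone, since that proof shows $L_q''$ has no zero in $(0,z_q]$ for any admissible $q$.
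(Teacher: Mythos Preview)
Your proof is correct and follows essentially the same route as the paper: both combine Lemmas~\ref{sL-L}, \ref{strictly-convex} (applied at index $p-1$), and \ref{H-sL} to reduce the sign of $\mathcal{H}_pL_p$ to the convexity/concavity of $L_{p-1}$ on $(0,z_{p-1}]$. The only difference is cosmetic: you compute $(sL_p')'''=-pL_{p-1}''$ explicitly, while the paper phrases the same step as ``$(sL_p')'$ is strictly concave.''
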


\begin{proof} 
Let $p>2$. By Lemma~\ref{strictly-convex}, $L_{p-1}$ is strictly convex on 
$[0,z_{p-1}]$. Therefore, by Lemma~\ref{sL-L}, $(sL_p')'$ is 
strictly concave in $[0,z_{p-1}]$, and by Lemma~\ref{H-sL}, 
$\mathcal H_pL_p<0$ in $(0,z_{p-1}]$. Similarly, the case $1<p<2$  
follows from the fact that $L_{p-1}$ is strictly concave in $[0,z_{p-1}]$.
\end{proof}

Thus, for $2< p<\infty$, $L_p$ satisfies both \eqref{g-cond1} and \eqref{g-cond2} on 
$[0,z_p]$. Therefore, by \eqref{H-U}, $\mathcal U$ satisfies $\eqref{U-simple1}$ and 
$\eqref{U-simple2}$ on 
$\{(x,y):\frac{y}{x+y}\in[0,z_p]\}$.

\subsection{The obstacle function $v_{c_p}$}\label{est07} 

Set
\begin{gather*}
c_p=\frac{1-z_p}{z_p},\\
v_{c_p}(s)=(1-s)^p-\Bigl(\frac{1-z_p}{z_p}\Bigr)^ps^p.
\end{gather*}

Then $v_{c_p}(z_p)=0$,
\begin{gather*}
\mathcal H_pv_{c_p}=0,\\
\mathcal L_pv_{c_p}(s)=sp\Bigl[(p-1)(1-s)^{p-2}-
p\Bigl(\frac{1-z_p}{z_p}\Bigr)^ps^{p-2}\Bigl],
\end{gather*}
and
\begin{align*}
\mathcal L_pv_{c_p}(s)&>0,\qquad 0\le s<s_p,\\
\mathcal L_pv_{c_p}(s)&<0,\qquad s_p<s\le 1,
\end{align*}
where
$$
\Bigl(\frac{1-s_p}{s_p}\Bigr)^{p-2}=\frac {p}{p-1}
\Bigl(\frac{1-z_p}{z_p}\Bigr)^p.
$$

\begin{lemma}\label{est01}
If $p>2$, then $s_p<z_p$.
\end{lemma}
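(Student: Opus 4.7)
My plan is to reduce Lemma~\ref{est01} to a quadratic inequality in $z_p$ and then verify that inequality by evaluating the operator $\mathcal H_p$ applied to $L_p$ at the point $s=z_p$.

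The first step is purely algebraic. Since $s\mapsto(1-s)/s$ is strictly decreasing on $(0,1)$ and $p-2>0$, the desired inequality $s_p<z_p$ is equivalent to
$\bigl((1-s_p)/s_p\bigr)^{p-2}>\bigl((1-z_p)/z_p\bigr)^{p-2}$. Substituting the defining identity for $s_p$ into the left side turns this into $\tfrac{p}{p-1}\bigl((1-z_p)/z_p\bigr)^{2}>1$, which upon clearing denominators and rearranging becomes
\[
z_p^2-2pz_p+p>0.
\]

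The second step is to relate this quadratic expression to $\mathcal H_p L_p$. Plugging $s=z_p$ into the definition $\mathcal H_p g=-s(1-s)g''+(p-1)(1-2s)g'+p(p-1)g$ kills the last term since $L_p(z_p)=0$, and the Laguerre equation at $z_p$ gives $z_pL_p''(z_p)=-(1-z_p)L_p'(z_p)$, which I would use to eliminate $L_p''(z_p)$. A short computation then produces the factorization
\[
\mathcal H_p L_p(z_p)=L_p'(z_p)\bigl[(1-z_p)^2+(p-1)(1-2z_p)\bigr],
\]
and expanding the bracket gives exactly $z_p^2-2pz_p+p$.

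To conclude, I would invoke Proposition~\ref{Hneg}: since $z_p\in(0,z_{p-1}]$ by Lemma~\ref{rootsorder} and $p>2$, the value $\mathcal H_pL_p(z_p)$ is strictly negative. Combined with $L_p'(z_p)<0$ (also Lemma~\ref{rootsorder}) and the factorization above, this forces $z_p^2-2pz_p+p>0$, which by the first step is equivalent to $s_p<z_p$. I do not anticipate any real obstacle; the entire argument is a short algebraic manipulation that draws only on facts established earlier in this section.
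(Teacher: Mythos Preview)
Your argument is correct and is genuinely different from the paper's. Both proofs begin with the same algebraic reduction to the inequality $\tfrac{p-1}{p}<\bigl(\tfrac{1-z_p}{z_p}\bigr)^2$, equivalently $z_p^2-2pz_p+p>0$. From there the paper splits into two cases: for $2<p<3$ it plugs the threshold value $p/(p+\sqrt{p^2-p})$ into the explicit power series of $L_p$ and checks the sign by hand; for $p\ge 3$ it invokes the integral bound $z_p\le 2/(p+1)$ of Lemma~\ref{est03}. Your route avoids this case split entirely: you observe that the Laguerre equation at $z_p$ collapses $\mathcal H_pL_p(z_p)$ to $L_p'(z_p)\,(z_p^2-2pz_p+p)$, so the sign of the quadratic follows immediately from Proposition~\ref{Hneg} and the known sign of $L_p'(z_p)$. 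This is cleaner and more conceptual, since it ties the inequality directly to the structural fact $\mathcal H_pL_p<0$ rather than to numerical estimates; the paper's approach, on the other hand, is self-contained in the sense that it does not depend on Proposition~\ref{Hneg}. Both are short, but yours is the more elegant of the two.
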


\begin{proof} It suffices to verify that
\begin{equation}\label{est02}
\frac {p-1}{p}<\Bigl(\frac{1-z_p}{z_p}\Bigr)^2.
\end{equation}

First, suppose that $2<p<3$. Estimate \eqref{est02} is equivalent to 
$$
z_p<\frac{p}{p+\sqrt{p^2-p}}.
$$
By \eqref{f1}, it suffices to check that
\begin{gather*}
0>L_p\Bigl(\frac{p}{p+\sqrt{p^2-p}}\Bigr)\\= 
1-\frac{p^2}{p+\sqrt{p^2-p}}+
\frac{p(p-1)}{4}\cdot\frac{p^2}{(p+\sqrt{p^2-p})^2}\\-
\sum_{n\ge 3}\frac{p(p-1)(p-2)(3-p)\cdots (n-1-p)}{n!^2}
\cdot\frac{p^n}{(p+\sqrt{p^2-p})^n}.
\end{gather*}
This follows from the estimate
$$
0>1-\frac{p^2}{p+\sqrt{p^2-p}}+
\frac{p^3(p-1)}{4(p+\sqrt{p^2-p})^2}
$$
or equivalently, for $2<p<3$,
\begin{gather*}
(p^2-p-\sqrt{p^2-p})(p+\sqrt{p^2-p})>\frac{p^3(p-1)}{4}, \\
\iff 4(p^2-2p)\sqrt{p^2-p}>p^4-5p^3+8p^2-4p, \\
\iff 4\sqrt{p^2-p}>(p-1)(p-2).
\end{gather*}

Second, if $p\ge 3$, then we use that by Lemma~\ref{est03}, 
$z_p\le 2/(p+1)$.
Therefore,
$$
\Bigl(\frac{1-z_p}{z_p}\Bigr)^2\ge \Bigl(\frac{p-1}{2}\Bigr)^2>
\frac {p-1}{p}.
$$
\end{proof}

In a similar way we have 

\begin{lemma}\label{est05}
If $1<p<2$, then 
$$
\frac {p-1}{p}>\Bigl(\frac{1-z_p}{z_p}\Bigr)^2.
$$
\end{lemma}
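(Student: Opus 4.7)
The proof follows the pattern of Lemma~\ref{est01}, exploiting that for $1<p<2$ the Laguerre Taylor coefficients have the \emph{opposite} sign pattern from the case $2<p<3$.

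\emph{Reduction.} Set $q:=p-\sqrt{p^2-p}=\frac{p}{p+\sqrt{p^2-p}}$. Then $q\in(0,1)$ for $p>1$, and a direct computation gives $\bigl(\tfrac{1-q}{q}\bigr)^2=\tfrac{p-1}{p}$; the claim is therefore equivalent to $z_p>q$. It suffices to establish $L_p(q)>0$ and then to deduce $q<z_p$ from this.

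\emph{Sign of the coefficients and convexity.} Writing $L_p(x)=\sum_{n\ge 0}c_n x^n$ with $c_n=(-1)^n p(p-1)\cdots(p-n+1)/(n!)^2$, I observe that for $1<p<2$ every coefficient with $n\ne 1$ is \emph{positive}: for $n\ge 3$ the product $(p-2)(p-3)\cdots(p-n+1)$ has $n-2$ negative factors, giving sign $(-1)^{n-2}$, which cancels the leading $(-1)^n$. Consequently $L_p''\ge 0$ on $[0,\infty)$, i.e., $L_p$ is convex on $[0,\infty)$, and
\[
L_p(q)\ \ge\ 1-pq+\tfrac{p(p-1)}{4}q^2.
\]
The convexity of $L_p$ on $[0,1]$, together with $L_p(z_p)=0$ and $L_p(1)<0$ (already proved for $1<p\le 2$ in the excerpt), rules out the possibility $q\in(z_p,1)$ once $L_p(q)>0$ is known. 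Hence it is enough to show the quadratic truncation above is strictly positive.

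\emph{Positivity of the quadratic.} Using $q=p-\sqrt{p^2-p}$ together with the identity $q^2=2pq-p$, the quadratic rearranges as
\[
1-pq+\tfrac{p(p-1)}{4}q^2=\tfrac{2-p}{4}\bigl[\,2p(p+1)\sqrt{p(p-1)}-(p-1)(2p^2+3p+2)\,\bigr].
\]
Since $2-p>0$, the sign is determined by the bracket. Both of its terms are positive, so squaring the inequality $2p(p+1)\sqrt{p(p-1)}>(p-1)(2p^2+3p+2)$ and dividing by $p-1>0$ reduces it to the polynomial inequality $-p^3+5p^2+8p+4>0$, which is obvious for $1<p<2$ (indeed $p^3<8$ while $5p^2+8p+4>17$). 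The algebra in this last step is the only real calculation; no serious obstacle is expected, since the sign-pattern observation simultaneously supplies both the quadratic lower bound and the convexity of $L_p$ that are needed.
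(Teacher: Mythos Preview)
Your proof is correct and follows the approach the paper has in mind when it writes ``in a similar way'': reduce to $z_p>q$ with $q=p/(p+\sqrt{p^2-p})$, exploit the sign pattern of the Taylor coefficients for $1<p<2$ to bound $L_p$ below by its quadratic truncation, and check the quadratic is positive at $q$. You correctly identify and handle one point that is \emph{not} parallel to Lemma~\ref{est01}: there, $L_p(q)<0$ immediately forces $q>z_p$ because $L_p>0$ on $[0,z_p)$, whereas here $L_p(q)>0$ does not by itself give $q<z_p$; your observation that the coefficient signs yield global convexity of $L_p$, combined with $L_p(1)<0$, cleanly rules out $q\in(z_p,1)$.

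One simplification: if you track the algebra of Lemma~\ref{est01} literally rather than passing through your identity, the quadratic positivity reduces (after dividing by $p$ and rearranging) to $\tfrac{(p-1)(p-2)^2}{4}>\sqrt{p^2-p}\,(p-2)$, which is immediate for $1<p<2$ since the left side is positive and the right side negative. This avoids the polynomial computation at the end.
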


\subsection{The touching points}\label{touchpo}
For large $a$, we have 
$$
aL_p(r)>v_{c_p}(r),\quad r\in [0,z_p),\qquad
aL'_p(z_p)<v'_{c_p}(z_p).
$$ 
Now we lower the value of $a$ until either (i)
the graph of $aL_p$ on $[0,z_p)$
first touches the graph of $v_{c_p}$ or (ii) 
$aL_p(0)=v_{c_p}(0)$ or (iii) $aL'_p(z_p)=v'_{c_p}(z_p)$.
In fact, the case (ii) reduces to the case (i) because 
$L_p(0)=v_{c_p}(0)=1$, and $L'_p(0)=v'_{c_p}(0)=-p$.

Let us analyze the case (i). The touching point $s$ satisfies 
the equalities
\begin{equation}\label{touching_point}
\begin{cases}
(1-s)^p-c_p^ps^p = a L_p(s), \\ 
-p(1-s)^{p-1}-pc_p^ps^{p-1}= aL_p'(s),
\end{cases}
\end{equation}
or, equivalently,
$$
\begin{cases}
-pc_p^ps^{p-1} = apL_p(s) + a(1-s)L_p'(s), \\
-p(1-s)^{p-1}= -apL_p(s) + asL_p'(s).
\end{cases}
$$
Hence,
$$
\frac{c_p^ps^{p-1}}{(1-s)^{p-1}}=\frac{pL_p(s)+(1-s)L_p'(s)}{-pL_p(s)+sL_p'(s)}
$$
which implies that
$$
c_p^p=\frac{(1-s)^pL_p'(s)+p(1-s)^{p-1}L_p(s)}{s^pL_p'(s)-ps^{p-1}L_p(s)} =: F(s).
$$
Next we differentiate the function $F$ and, by 
Proposition~\ref{Hneg}, obtain that
$$
F'(s)=\frac{p(1-s)^{p-2}}{s^p}\frac{L_p(s)\mathcal H_pL_p(s)}
{(sL_p'(s)-pL_p(s))^2}<0,\qquad 0<s<z_p.
$$
Since
$$
F(z_p)=\Bigl(\frac{1-z_p}{z_p}\Bigr)^p=c_p^p,
$$
we obtain that the case (i) is impossible.

Thus we have
   
\begin{theorem}\label{v-L}
For $c_p=\frac{1-z_p}{z_p}$ and for some $a_p>1$, the function $v_{c_p}$ touches $a_pL_p$ at $z_p$ and $v_{c_p}<a_pL_p$ on $[0,z_p)$.  
\end{theorem}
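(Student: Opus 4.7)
The plan is to introduce $a_p$ via the slope-matching condition at $z_p$ (case (iii) in the preceding analysis) and then read off the required strict inequality from the monotonicity of the ratio $\mathcal{A}(s):=v_{c_p}(s)/L_p(s)$. First, I would define $a_p:=v'_{c_p}(z_p)/L_p'(z_p)$. A direct computation gives $v'_{c_p}(z_p)=-p(1-z_p)^{p-1}/z_p$, which combined with Lemma~\ref{rootsorder} ($L_p'(z_p)<0$) makes $a_p>0$ well-defined. The identity $v_{c_p}(z_p)=0=a_pL_p(z_p)$ is built into the definition of $c_p$ together with $L_p(z_p)=0$, so $v_{c_p}$ and $a_pL_p$ automatically share value and first derivative at $z_p$: this is the ``touching at $z_p$'' part of the statement.

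For the remaining pieces — strict inequality $v_{c_p}<a_pL_p$ on $[0,z_p)$ and $a_p>1$ — I would study $\mathcal{A}(s):=v_{c_p}(s)/L_p(s)$, which is smooth on $[0,z_p)$ since $L_p>0$ there (by Lemma~\ref{strictly-convex} together with $L_p(0)=1$ and $L_p(z_p)=0$). Clearly $\mathcal{A}(0)=1$, and by L'Hopital's rule $\lim_{s\to z_p^-}\mathcal{A}(s)=v'_{c_p}(z_p)/L_p'(z_p)=a_p$.

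Any interior critical point $s_0\in(0,z_p)$ of $\mathcal{A}$ would satisfy $v'_{c_p}(s_0)/L_p'(s_0)=v_{c_p}(s_0)/L_p(s_0)$, which is exactly the tangency system \eqref{touching_point} with $a=\mathcal{A}(s_0)$; the derivation preceding the theorem then forces $F(s_0)=c_p^p$, contradicting $F(s)>c_p^p$ strictly on $(0,z_p)$ (we have $F'<0$ by Proposition~\ref{Hneg} and $F(z_p)=c_p^p$). Hence $\mathcal{A}$ has no interior critical point and is strictly monotonic on $(0,z_p)$. A short Taylor expansion pins down the direction: from $(1-s)^p=1-ps+\frac{p(p-1)}{2}s^2+O(s^3)$, $L_p(s)=1-ps+\frac{p(p-1)}{4}s^2+O(s^3)$, and (for $p>2$) $c_p^ps^p=o(s^2)$, one obtains
$$
\mathcal{A}(s)-1=\frac{p(p-1)}{4}\,s^2+o(s^2),
$$
so $\mathcal{A}$ is increasing near $0$, and hence strictly increasing on all of $[0,z_p)$. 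Consequently $1=\mathcal{A}(0)<\mathcal{A}(s)<a_p$ for $s\in(0,z_p)$, which simultaneously gives $a_p>1$ and $v_{c_p}(s)<a_pL_p(s)$ on $[0,z_p)$.

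The genuine work is concentrated in the monotonicity of $\mathcal{A}$ and the sign of its slope near the origin; the former is handed to us by the preceding $F$-analysis, and the latter by the short Taylor comparison between $L_p$ and $(1-s)^p$, which uses $p>2$ precisely to make the $c_p^p s^p$ term negligible at second order.
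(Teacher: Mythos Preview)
Your proof is correct and rests on the same computation as the paper (the monotonicity of $F$ on $(0,z_p)$ via Proposition~\ref{Hneg}), but the packaging differs. The paper runs a continuity argument: it lowers $a$ from large values until one of three events occurs, argues that the ``case (ii) touching at $s=0$'' is a degenerate instance of case (i), and rules out case (i) via $F(s)>c_p^p$; this forces case (iii) and produces $a_p$. Your route sidesteps the continuity step entirely by working with the ratio $\mathcal{A}(s)=v_{c_p}(s)/L_p(s)$: the $F$-computation becomes exactly the statement that $\mathcal{A}$ has no interior critical point, and your second-order Taylor comparison at $s=0$ pins down that $\mathcal{A}$ is increasing. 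This gives $a_p>1$ and the strict inequality in one stroke, whereas in the paper's write-up the conclusion $a_p>1$ is left somewhat implicit (the reduction of case (ii) to case (i) does not literally fit the $F$-argument at $s=0$, where $F$ has a $0/0$ indeterminacy). So both arguments are equivalent in substance, but your monotone-ratio formulation is a bit more transparent about why the constant exceeds $1$, at the small cost of invoking the $p>2$ hypothesis to make the $c_p^p s^p$ term negligible at second order.
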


Let us define 
\begin{equation}\label{majorant1}
g(s) = 
\begin{cases} 
a_pL_p(s), \qquad 0<s\le  z_p, \\ 
v_{c_p}(s), \qquad z_p<s\le  1.
\end{cases}
\end{equation}
Then $g\in C^1[0,1]$. By Lemma~\ref{est01}, $\mathcal L_pg\le 0$. Furthermore, 
by Proposition~\ref{Hneg} we have $\mathcal H_pg\le 0$.
Therefore, $g$ majorizes the obstacle function $v_{c_p}$ and 
satisfies \eqref{g-cond1} and \eqref{g-cond2}. 
Thus, the majorant $U(x,y)=(x+y)^pg(\frac{y}{x+y})$ satisfies 
\eqref{U-simple1} and \eqref{U-simple2}.

\subsection{Sharpness of the constant}
It remains to indicate that for any $c<\frac{1-z_p}{z_p}$,    
the function $v_c$ has no majorant satisfying \eqref{g-cond1} 
and \eqref{g-cond2}. Note that 
for $c<\frac{1-z_p}{z_p}$, $v_c(z_p)>0$. So any possible 
supersolution $f$ of the Laguerre equation, such that 
$f\ge v_c$ satisfies the inequality $f(z_p)>0$. However, this  
contradicts to Lemma~\ref{onlyLp}.
Since the Bellman function (which has the 
best constant)  satisfies the corresponding quadratic form 
inequalities, it follows that our constant is sharp.

\section{The general case, $2<p<\infty$}\label{generalcase}

Let $c_p=\frac{1-z_p}{z_p}$. In Section~\ref{simplecase}, we consider
conformal martingales $Y=(Y_1,Y_2)$ and real martingales $X$ satisfying 
$d\langle X\rangle=d\langle Y_i\rangle$, and established the 
sharp estimate
$$
\|X\|_p \le \Bigl(\frac{1-z_p}{z_p}\Bigr)\|Y\|_p,
$$
where $z_p$ is the smallest root of the bounded on $(0,1)$ Laguerre function $L_p$. We started with $V(x,y)=x^p-c_p^p y^p$ and found a majorant $U(x,y)$ satisfying the required quadratic-form inequalities \eqref{U-simple1} and \eqref{U-simple2}.
Now we turn to the general case where $X$ is a complex valued martingale and 
$d\langle X\rangle \le d\langle Y_i\rangle$. 
The function $U$ should satisfy \eqref{U-condition1} and \eqref{U-cond2}, in addition to \eqref{U-simple1} and \eqref{U-simple2} (or more precisely, in addition to \eqref{U-cond1}). We will show that the function $U$ obtained in the simple setting 
in Section~\ref{simplecase} works also in the general case. Henceforth, $U$ will denote this function, and $g$ will be its corresponding one-dimensional function defined in \eqref{majorant1}.

Recall conditions \eqref{U-condition1} and \eqref{U-cond2}:
\begin{gather}
U_x >0 \textrm{\ and\ }  |U_{xy}|\le \frac{U_x}{x}-U_{xx}
\hspace{6 cm} 
\notag\\
\Longrightarrow
U_{xy}^2 +\Bigl(\frac{U_x}{x} + \Bigl(U_{yy}+\frac{U_y}{y}\Bigr)\Bigr)\Bigl(\frac{U_x}{x}-U_{xx}\Bigr) \le 0\label{U-cond'},\\
\label{U-cond2'}
U_{xx}\le -|U_{xy}| \le 0 \Longrightarrow U_{xy}^2 - U_{xx}
\Bigl(U_{yy} + \frac{U_y}{y}\Bigr) \le 0.
\end{gather}
The following lemma shows that both these implications are trivially satisfied.

\begin{lemma}\label{est04}
For $x>0$, $U_x>0$ and $-U_{xx}< \frac{U_x}{x}-U_{xx}<|U_{xy}|$. 
\end{lemma}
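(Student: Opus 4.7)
The plan is to dispose of the two claimed inequalities separately on the two pieces of $g$ in \eqref{majorant1}, first noting that $-U_{xx}<U_x/x-U_{xx}$ is just a restatement of $U_x/x>0$, so the whole statement reduces to proving (a) $U_x>0$ and (b) $U_x/x-U_{xx}<|U_{xy}|$.

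On the obstacle piece $s=y/(x+y)\in(z_p,1]$, where $U=x^p-c_p^py^p$, both claims are immediate: $U_x=px^{p-1}>0$, $U_{xy}=0$, and $U_x/x-U_{xx}=p(2-p)x^{p-2}<0$ since $p>2$. On the Laguerre piece $s\in(0,z_p)$, where $U(x,y)=(x+y)^pa_pL_p(s)$, the identity $pL_p-sL_p'=pL_{p-1}$ from Lemma~\ref{sL-L} gives $U_x=(x+y)^{p-1}a_p\,pL_{p-1}(s)$, which is positive because $s<z_p<z_{p-1}$ (Lemma~\ref{rootsorder}) and $L_{p-1}>0$ on $(0,z_{p-1})$; this settles (a). Moreover, Proposition~\ref{Hneg} gives $U_{xy}=(x+y)^{p-2}a_p\mathcal H_pL_p(s)<0$, so $|U_{xy}|=-U_{xy}$ and (b) is equivalent to $U_x/x<U_{xx}-U_{xy}$.

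The technical core is the simplification of $U_{xx}-U_{xy}$. Expanding the partial derivatives of $U=(x+y)^pg(y/(x+y))$ in terms of $g,g',g''$, using the Laguerre equation $sL_p''+(1-s)L_p'+pL_p=0$, and invoking the auxiliary identity $(p-s)L_p'+pL_p=pL_{p-1}'$ (obtained by differentiating $sL_p'=p(L_p-L_{p-1})$ and substituting the Laguerre equation once more), I would arrive at $U_{xx}-U_{xy}=-(x+y)^{p-2}a_p\,pL_{p-1}'(s)$. Combined with $U_x/x=(x+y)^{p-2}a_p\,pL_{p-1}(s)/(1-s)$, inequality (b) collapses on this piece to
\[ L_{p-1}(s)+(1-s)L_{p-1}'(s)<0. \]

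This final inequality is where the assumption $p>2$ is decisive. Applying the Laguerre equation to $L_{p-1}$, namely $sL_{p-1}''+(1-s)L_{p-1}'+(p-1)L_{p-1}=0$, rewrites the left-hand side as $(2-p)L_{p-1}(s)-sL_{p-1}''(s)$. Both terms are strictly negative on $(0,z_{p-1})$: the first because $p>2$ and $L_{p-1}>0$ (Lemma~\ref{rootsorder}), the second because $L_{p-1}$ is strictly convex on $(0,z_{p-1}]$ by Lemma~\ref{strictly-convex} applied to the parameter $p-1>1$. The main obstacle I foresee is carrying out the algebra that reduces $U_{xx}-U_{xy}$ cleanly to $-a_p\,pL_{p-1}'$; once this identity is in hand, convexity of $L_{p-1}$ finishes the argument in one line.
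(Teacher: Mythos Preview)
Your proof is correct, and the overall architecture matches the paper's: split into the obstacle piece (where both claims are immediate) and the Laguerre piece, observe that $U_{xy}<0$ there, and reduce (b) to $\frac{U_x}{x}-U_{xx}+U_{xy}<0$. The difference is in how this last inequality is reached. The paper computes directly in terms of $g=a_pL_p$ and obtains
\[
\frac{U_x}{x}-U_{xx}+U_{xy}=\frac{1}{1-s}\bigl(\mathcal H_pg(s)+(p-2)(sg'(s)-pg(s))\bigr),
\]
finishing via Proposition~\ref{Hneg} and the elementary signs $g>0$, $g'<0$. You instead push everything over to $L_{p-1}$ using Lemma~\ref{sL-L}, getting the clean identities $U_x=(x+y)^{p-1}a_p\,pL_{p-1}$ and $U_{xx}-U_{xy}=-(x+y)^{p-2}a_p\,pL_{p-1}'$, and reduce (b) to $(2-p)L_{p-1}-sL_{p-1}''<0$, settled by positivity and strict convexity of $L_{p-1}$ (Lemma~\ref{strictly-convex} with parameter $p-1>1$). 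The two endgames are in fact the same expression: since $\mathcal H_pL_p=-spL_{p-1}''$ (combine Lemmas~\ref{sL-L} and~\ref{H-sL}) and $sL_p'-pL_p=-pL_{p-1}$, the paper's bracket equals $a_p\,p\bigl((2-p)L_{p-1}-sL_{p-1}''\bigr)$. Your route makes the dependence on $L_{p-1}$ explicit from the start and avoids a second appeal to Proposition~\ref{Hneg}; the price is the extra identity $(p-s)L_p'+pL_p=pL_{p-1}'$, which indeed follows as you indicate by differentiating $sL_p'=p(L_p-L_{p-1})$ and substituting the Laguerre equation.
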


This lemma implies that the `if' parts of \eqref{U-cond'} and 
\eqref{U-cond2'} do not hold for $x>0$. The special case when 
$x=0$ is also simple. Since $x$ corresponds to $1-r$, $x=0$ 
corresponds to $r=1$, where $g=v_{c_p}$ and hence $U=V$. Both 
$V_{xx}$ and $\frac{V_x}{x}$ are $0$ when $x=0$, and 
\eqref{U-cond'} and \eqref{U-cond2'} follow. 

\begin{proof}[Proof of Lemma~\ref{est04}]
When $U=V$, we have $V_x=px^{p-1}>0$ and 
$$
\frac{V_x}{x}-V_{xx}=-p(p-2)x^{p-2}<0=|V_{xy}|.
$$

From now on we assume that $U$ corresponds to the Laguerre function $g=a_pL_p$, 
$$
U(x,y)=(x+y)^pg\Bigl(\frac{y}{x+y}\Bigr),\qquad 0\le 
\frac{y}{x+y}<z_p.
$$
A simple computation shows that $U_x(1-s,s)=pg(s)-sg'(s)>0$ since $g$ and $-g'$ are strictly positive in $(0,z_p)$. 
%This gives us the first inequality $-U_{xx}<\frac{U_x}{x}-U_{xx}$. 
It remains to show that
\begin{equation}\label{condition1}
\frac{U_x}{x}-U_{xx}< |U_{xy}|.
\end{equation}
By \eqref{H-U} and Propositon~\ref{Hneg}, $U_{xy}(1-s,s)=\mathcal H_pg(s)<0$ in $(0,z_p)$, 
and hence $|U_{xy}|=-U_{xy}$. Therefore, \eqref{condition1} is equivalent to 
\begin{equation}\label{condition2}
\frac{U_x}{x}-U_{xx} +U_{xy} <0.
\end{equation}

Furthermore, by the Laguerre equation, we have
\begin{align}
\Bigl(\frac{U_x}{x}\Bigr)(1-s,s) &= \frac{pg(s)}{1-s} - \frac{s}{1-s}g'(s), \label{Ux/x}\\
U_{xx}(1-s,s)&= s^2g''(s)-2(p-1)sg'(s)+p(p-1)g(s)= s^2g''(s) \nonumber \\ &
-2(p-1)sg'(s)-(p-1)sg''(s)-(p-1)(1-s)g'(s) \nonumber \\ &= -s(p-1-s)g''(s)-(p-1)(1+s)g'(s),\label{Uxx}\\
U_{xy}(1-s,s) &= \mathcal H_pg(s) = -s(p-s)g''(s)-s(p-1)g'(s). \label{Uxy}
\end{align}
By \eqref{Ux/x}, \eqref{Uxx} and \eqref{Uxy}, condition 
\eqref{condition2} is equivalent to 
\begin{multline*}
\frac{pg(s)-sg'(s)}{1-s}-sg''(s)+(p-1)g'(s)\\=\frac1{1-s}\bigl(\mathcal H_pg(s)
+(p-2)(sg'(s)-pg(s))\bigr)<0.
\end{multline*}
The latter inequality holds because $\mathcal H_pg$ and 
$sg'(s)-pg(s)$ are strictly negative on $(0,z_p)$.
\end{proof}

\section{Left hand side conformality, $1<p<2$}

In this section, we show that the same methods extend to the case of left hand side conformality when $1<p<2$. Again for the sake of simplicity, we work with the case
$$
d\langle Y_i\rangle= \frac{1}{2}d\langle Y\rangle \le 
d\langle X\rangle. 
$$
With this condition, the constant corresponding to \eqref{constp<2} is 
$$
\frac{z_p}{1-z_p}.
$$
Let us begin with the special case $d\langle X\rangle=
d\langle Y_i\rangle$. The obstacle functions are 
$$
V_c(x,y) = y^p-c^px^p,\hspace{5mm} v^*_c(s)=s^p-c^p(1-s)^p,
$$
and the majorants 
$$
U(x,y)=(x+y)^pg\Bigl(\frac{y}{x+y}\Bigr),\qquad g(s)=U(1-s,s)
$$ 
should satisfy the quadratic form inequalities \eqref{U-simple1}, \eqref{U-simple2},  \eqref{g-cond1}, and \eqref{g-cond2}. The function $v^*_c$ takes the value $-c^p$ at $s=0$ and increases to $1$ at $s=1$.  
We start with
$$
c_p=\frac{z_p}{1-z_p}
$$
and the function $v^*_{c_p}$. Arguing as in 
Subsection~\ref{est07} (see Lemma~\ref{est05}) we obtain that
$$
\mathcal L_pv^*_{c_p}(s)<0,\qquad z_p<s\le 1.
$$
For $a=0$ we have 
$$
aL_p(r)>v^*_{c_p}(r),\quad r\in [0,z_p),\qquad
aL'_p(z_p)<v^{*\prime}_{c_p}(z_p).
$$ 
Now we lower the value of $a$ until either (i)
the graph of $aL_p$ on $[0,z_p)$
first touches the graph of $v^*_{c_p}$ or (ii) 
$aL_p(0)=v_{c_p}(0)$ or (iii) $aL'_p(z_p)=v^{*\prime}_{c_p}(z_p)$.
In fact, the case (ii) reduces to the case (i) because 
$-c_p^pL_p(0)=v^*_{c_p}(0)$, and 
$-c_p^pL'_p(0)=v^{*\prime}_{c_p}(0)=pc_p^p$.

Let us analyze the case (i). The touching point $s$ satisfies 
the equalities
$$
\begin{cases}
s^p-c_p^p(1-s)^p = a L_p(s), \\ 
ps^{p-1}+pc_p^p(1-s)^{p-1}=aL_p'(s).
\end{cases}
$$
Put $\tilde{c}_p=\frac{1}{c_p}$ and 
$\tilde{a}=-\frac{a}{c_p^p}$, and obtain  
\begin{equation}\label{touching_point3}
\begin{cases}
(1-s)^p-\tilde{c_p}^ps^p = \tilde{a} L_p(s), \\ 
-p(1-s)^{p-1}-p\tilde{c_p}^ps^{p-1}= \tilde{a}L_p'(s).
\end{cases}
\end{equation}
Note that \eqref{touching_point3} is similar to 
\eqref{touching_point}. Arguing as in Subsection~\ref{touchpo}, we have
\begin{gather*} 
\tilde{c}_p^p = \frac{(1-s)^pL_p'(s)+p(1-s)^{p-1}L_p(s)}{s^pL_p'(s)-ps^{p-1}L_p(s)}=:F(s),\\
F'(s) = \frac{p(1-s)^{p-2}}{s^p}\frac{L_p(s)\mathcal H_pL_p(s)}{(sL_p'(s)-pL_p(s))^2}.
\end{gather*}

By Proposition~\ref{Hneg}, $\mathcal H_pL_p>0$,
and hence, $F'>0$ in $(0,z_p)$. 
Since
$$
F(z_p)=\Bigl(\frac{1-z_p}{z_p}\Bigr)^p=\tilde{c}_p^p,
$$
we obtain that the case (i) is impossible.

Thus, in the case $1<p<2$, 
for $c_p=\frac{z_p}{1-z_p}$ and for some $a_p<0$, the function $v^*_{c_p}$ touches $a_pL_p$ at $z_p$ and $v^*_{c_p}<a_pL_p$ on $[0,z_p)$.  

The best majorant satisfying the required quadratic form inequalities is
$$
g(s) = 
\begin{cases} 
a_pL_p(s), \qquad 0<s\le  z_p, \\ 
v^*_{c_p}(s), \qquad z_p<s\le  1.
\end{cases}
$$

\subsection{Sharpness of the constant}
Once again, for any $c<\frac{1-z_p}{z_p}$,    
the function $v_c$ has no majorant satisfying 
\eqref{g-cond2}. 
Indeed, $v_c(z_p)>0$, and any %possible 
supersolution $f$ of the Laguerre equation such that 
$f\ge v_c$ satisfies the inequality $f(z_p)>0$. However, this contradicts to 
Lemma~\ref{onlyLp}.
Since the Bellman function (which has the 
best constant)  satisfies the corresponding quadratic form 
inequalities, it follows that our constant is sharp.

\subsection{The general case}
For the left hand side conformality with $1<p<2$, the general quadratic form requirement is
\begin{equation}\label{Ucondition1}
U_{xx}|h_1|^2+\frac{U_x}{x}|h_2|^2+2U_{xy}(h_1\cdot k)+(U_{yy}+\frac{U_y}{y})|k|^2 \le 0
\end{equation}
for all vectors $h_1$, $h_2$ and $k$ satisfying
$$%\begin{equation}\label{h-k1}
|k|^2 \le |h_1|^2+|h_2|^2.
$$%\end{equation}
Setting $|k|=1$, $a=|h_1|$, and $\beta=(|h_1|^2+|h_2|^2)^{1/2}$, we obtain an equivalent form: 
\begin{multline}\label{cond-onab}
-\Bigl(\frac{U_x}{x}-U_{xx}\Bigr)a^2 + 2|U_{xy}|a + \frac{U_x}{x}\beta^2 +\Bigl(U_{yy}+\frac{U_y}{y}\Bigr) \le 0,\\
\text{for all\ }\beta\ge\max(1,a).
\end{multline}

\subsubsection{The case when $X$ is real-valued and $h_2=0$}
In this case, \eqref{Ucondition1} becomes 
\begin{equation}\label{U-condi1}
U_{xx}\beta^2+2|U_{xy}|\beta+ \Bigl(U_{yy}+\frac{U_y}{y}\Bigr)\le 0,
\qquad \beta\ge 1.
\end{equation}
When $U=V_c$, we have $U_{xx}= -c^p p(p-1)x^{p-2}<0$, $U_{xy}=0$, and therefore the maximum value in the left hand side is attained  for minimal $\beta =1$. Thus, \eqref{U-condi1} 
follows from \eqref{U-simple1}, \eqref{U-simple2} which hold
for $U$ as shown above.

Let us assume now that U corresponds to the Laguerre function
$g=a_pL_p$. Then 
$U(x,y)=(x+y)^pg(\frac{y}{x+y})$, and as above,
\begin{align*}
U_{xx}(1-s,s)&=s^2g''(s)-2(p-1)sg'(s)+p(p-1)g(s),\\
U_{xy}(1-s,s)&=-s(1-s)g''(s)+(p-1)(1-2s)g'(s)+p(p-1)g(s).
\end{align*}
Since $g\le 0$, $g'>0$, $g''<0$, and $\mathcal H_pg\le 0$ in $[0,z_p]$, we have 
$$
sg''(s)-(p-1)g'(s)< 0,\qquad 0\le s\le z_p,
$$
and hence,
$$
U_{xx}-U_{xy}\le 0,\qquad U_{xx}\le 0,\qquad U_{xy}\le 0.
$$
Therefore
$$
\frac{-|U_{xy}|}{U_{xx}}\le 1,
$$
and we obtain that the maximum value in the left hand side of \eqref{U-condi1} is attained at 
$\beta =1$ which is the special case considered in the previous section, so \eqref{U-condi1} is satisfied in this case.

Thus, $U$ always satisfies \eqref{U-condi1}. This completes the 
argument for the case when $X$ is real-valued, 
$d\langle Y_i\rangle\le d\langle X\rangle$, and $h_2=0$.

\subsubsection{The case when $X$ is complex-valued}
Now we deal with \eqref{cond-onab} in full generality.

%\begin{gather}
%\hspace{-2.5 cm}-\Bigl(\frac{U_x}{x}-U_{xx}\Bigr)a^2 + 2|U_{xy}|a + \frac{U_x}{x}\beta^2 +\Bigl(U_{yy}+\frac{U_y}{y}\Bigr) \le 0,\label{lastest}\\
%\hspace{8 cm}
%\beta\ge\max(1,a).\notag
%\end{gather}

If $\frac{U_x}{x}-U_{xx}\le 0$, then the maximal value 
of the expression in the 
left hand side of \eqref{cond-onab} for $a\le\beta$ and for 
fixed $\beta$ occurs when $a=\beta$, and we return to \eqref{U-condi1}. Therefore, from now on we assume that 
$\frac{U_x}{x}-U_{xx}>0$. This can happen only if $U>V=V_{c_p}$ since 
$\frac{V_x}{x}-V_{xx}=-c^p_p p(2-p)x^{p-2}<0$. Therefore, we can assume 
that $U>V$. Since $U_{xy}\le 0$, the maximal value of the expression in the 
left hand side of \eqref{cond-onab} 
as a function of $a\in[0,\infty)$
occurs at 
$$
a_*=\frac{|U_{xy}|}{\frac{U_x}{x}-U_{xx}}.
$$

As in the proof of Lemma~\ref{est04},
\begin{multline*}
\Bigl(\frac{U_x}{x}-U_{xx}+U_{xy}\Bigr)(1-s,s)
%=\frac{pg(s)-sg'(s)}{1-s}-sg''(s)+(p-1)g'(s)
\\
=\frac1{1-s}\bigl(\mathcal H_pg(s)-(2-p)(sg'(s)-pg(s))\bigr)
\le 0
\end{multline*}
on $[0,z_p]$, and hence, $a_*\ge 1$.

If $\beta\le a_*$, then the maximal value in $a\in[0,\beta]$ 
in the 
left hand side of \eqref{cond-onab} is attained at $\beta$ and we return to \eqref{U-condi1}. If $a_*<\beta$, then the maximal value is at $a=a_*$, and it remains to verify that
\begin{equation}
U_{xy}^2+\Bigl(\frac{U_x}{x}\beta^2+U_{yy}+\frac{U_y}{y}\Bigr)\Bigl(\frac{U_x}{x}-U_{xx}\Bigr)\le 0.
\label{estw1}
\end{equation}

Since 
$$
U_x(1-s,s)=pg(s)-sg'(s)<0,
$$ 
the maximal value in the 
left hand side of \eqref{estw1} is attained for $\beta=a_*=a$ and we return once again to \eqref{U-condi1}
which completes the proof for the case of complex $X$.

\section{Estimates on $z_p$ and optimal constants in Theorem~\ref{MAINThm}}
\label{Asymptotics}

Let $J_0$ be the Bessel function of the zero order,
$$
J_0(x)=\sum_{n\ge 0}(-1)^n\frac{x^{2n}}{2^{2n}(n!)^2}.
$$
Denote its first positive zero by $j_0$. It is known (see, for example, 
\cite[Section 15.51]{WAT}) that
$$
j_0\approx 2.404826.
$$

Next, we use a Mehler-Heine type formula (see \cite[Theorem 8.1.3]{SZE}),
$$
\lim_{n\to\infty,\,n\in\mathbb N} L_n(x/n)=J_0(2\sqrt x).
$$
Arguing as in \cite[Section 10]{BJV} we conclude that
$$
\lim_{p\to\infty}pz_p=\frac{j^2_0}{4}.
$$
Hence,
$$
\lim_{p\to\infty}\frac{C_p}{p}=\frac{4\sqrt 2}{j^2_0}\approx
0.97815,
$$
where $C_p$ is the optimal constant in 
Theorem~\ref{MAINThm}~(2). 

Furthermore, by \eqref{f1}, for large $p$ we have
\begin{multline*}
0=L_{p'}(z_{p'})=1-(1+\varepsilon)(1-\delta)+
\frac{(1+\varepsilon)\varepsilon}{4}(1-\delta)^2 + \ldots\\ +
\frac{(1+\varepsilon)\varepsilon (1-\varepsilon)\ldots 
(n-2-\varepsilon)}{n!^2}(1-\delta)^n +\ldots\,,
\end{multline*}
where $\varepsilon=1/(p-1)$, $\delta=1-z_{p'}$,
and hence
$$
\delta-\varepsilon\Bigl(1-\sum_{n\ge 2}\frac{(n-2)!}{n!^2}\Bigr)=
O(\varepsilon^2+\delta^2),\qquad \delta\to 0+,\,
\varepsilon\to 0+.
$$

Denote 
\begin{equation}\label{sz01}
Q=1-\sum_{n\ge 2}\frac{(n-2)!}{n!^2}\approx 0.718282.
\end{equation}
Then
\begin{equation}\label{sz02}
\lim_{p\to\infty}p(1-z_{p'})=Q,
\end{equation}
and hence,
$$
\lim_{p\to\infty}\frac{C_{p'}}{p}=\frac1{Q\sqrt 2}\approx
0.98444,
$$
where $C_{p'}$ is the optimal constant in 
Theorem~\ref{MAINThm}~(1). 

Thus, we have 
$$
\lim_{p\to\infty}\frac{C_{p'}}{C_p}\approx
1.006,
$$

Finally, by \eqref{f1}, for $p>2$ we have
\begin{multline*}
L_{p'}\Bigl(1-\frac Qp\Bigr)=1-\frac p{p-1}\Bigl(1-\frac Qp\Bigr)+
\frac{p}{4(p-1)^2}\Bigl(1-\frac Qp\Bigr)^2 + \ldots\\ +
\frac{p(p-2)\ldots ((n-2)(p-1)-1)}{n!^2(p-1)^n}\Bigl(1-\frac Qp\Bigr)^n +\ldots\\<
-\frac {1-Q}{p-1}+
\frac{p}{4(p-1)^2}\Bigl(1-\frac Qp\Bigr)^2 + \ldots +
\frac{p(n-2)!}{n!^2(p-1)^2}\Bigl(1-\frac Qp\Bigr)^2 +\ldots\\=
\frac {1-Q}{p(p-1)^2}\bigl(-p^2+p+(p-Q)^2\bigr)<0.
\end{multline*}
Hence, $L_{p'}(1-\frac Qp)<0$, and thus 
\begin{equation}\label{sz03}
z_{p'}<1-\frac Qp.
\end{equation}

\section{Asymptotics for the $L^p$ norm of the Beurling--Ahlfors transform}

Here we formulate a new asymptotic result for the $L^p$ norm of the Beurling--Ahlfors transform $B$. Interestingly, Astala's theorem mentioned in Section~\ref{se2} would follow if it is only shown that $\|B\|_p$ has asymptotic order $p$; this was initially the motivation for looking for the norm. It remains an important sub-problem. Observe that (\ref{Beurest}) shows that 
$$%\begin{equation}%\label{asymp1}
\limsup_{p\rightarrow\infty} \frac{\|B\|_p}{p} \le \sqrt{2}.
$$%\end{equation}
This is presently the best published asymptotic information on $\|B\|_p$. In fact, the same one is obtained earlier in \cite{DV2}; it is proved there that 
\begin{equation}\label{DrVo1}
\|B\|_p  \le \sqrt{2}\,\tau_{p}(p^*-1),
\end{equation}
where
$$
\tau_p=\Bigl(\frac{1}{2\pi}\int_0^{2\pi} |\cos(\theta)|^pd\theta\Bigr)^{-1/p}.
$$
Although (\ref{DrVo1}) is worse overall than (\ref{Beurest}), the technique introduced in \cite{DV2} for obtaining this estimate leads us to the main result of this section. Let us state it next.
\begin{theorem}\label{main_thm}
Let $p>2$. Then
$$
\|B\|_p< \Bigl( \frac{p+3}{2}\pi \Bigr)^{1/(2p)}\cdot\frac{p-Q}{Q},
$$
where $Q$ is defined in \eqref{sz01}.
%\begin{equation}\label{main_result1}
%\lim_{p\rightarrow\infty} \frac{\|B\|_p}{p-1} \le \lim_{p\rightarrow \infty} \frac{z_{p'}}{p(1-z_{p'})} \approx 1.3922 < \sqrt{2}.
%\end{equation}
\end{theorem}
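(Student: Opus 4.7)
The plan is to combine the rotational-averaging representation of $B$ from \cite{DV2} with the new sharp one-sided conformal estimate of Theorem~\ref{MAINThm}, after passing to the dual exponent $p' = p/(p-1)\in(1,2)$ via the identity $\|B\|_p = \|B\|_{p'}$. The guiding observation is that the coefficient $\sqrt{2}$ appearing in the bound $\|B\|_p \le \sqrt{2}\,\tau_p(p^*-1)$ of \cite{DV2} originates from an application of Burkholder's general (non-conformal) subordination inequality, and should disappear once this is replaced by Theorem~\ref{MAINThm}(1) with its built-in factor $1/\sqrt 2$.

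Following \cite{DV2} and \cite{BaMH}, I would first represent
\[
Bf(x) = c\int_0^{2\pi} e^{-2i\theta}\, T_\theta f(x)\, d\theta,
\]
where $T_\theta$ is a directional second-order Riesz-type operator realised, via the It\^o calculus for space--time Brownian motion, as a conditional expectation $T_\theta f(x) = \mathbb E[\,Y^\theta_\tau\mid B_\tau = x\,]$ of a conformal martingale $Y^\theta$ subordinate to the martingale $X$ attached to $f$, with $d\langle Y^\theta\rangle \le d\langle X\rangle$ (the angular factor $|\cos\theta|$ is absorbed into the rotation kernel rather than into the subordination constant). For $p'\in(1,2)$, Theorem~\ref{MAINThm}(1) applied to this pair yields
\[
\|Y^\theta_\tau\|_{p'} \le \frac{1}{\sqrt{2}}\cdot\frac{z_{p'}}{1-z_{p'}}\,\|X_\tau\|_{p'}.
\]
A pointwise H\"older inequality in $\theta$ against the kernel $|\cos\theta|^{p-1}e^{-2i\theta}$, followed by Fubini and the above estimate, would produce the angular factor $\tau_p = (\tfrac{1}{2\pi}\int_0^{2\pi}|\cos\theta|^p\,d\theta)^{-1/p}$ and cancel one of the two $\sqrt{2}$'s, leaving
\[
\|Bf\|_{p'} \le \tau_p\cdot\frac{z_{p'}}{1-z_{p'}}\,\|f\|_{p'}.
\]

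It then remains to estimate the two factors. For the conformal factor, the non-asymptotic bound \eqref{sz03} gives $z_{p'}<1-Q/p$, hence $z_{p'}/(1-z_{p'}) = 1/(1-z_{p'})-1 < (p-Q)/Q$. For $\tau_p$, a direct Beta-integral evaluation yields $\tau_p^p = \sqrt{\pi}\,\Gamma(p/2+1)/\Gamma((p+1)/2)$. Gautschi's inequality $\Gamma(x+1)/\Gamma(x+\tfrac12)\le\sqrt{x+\tfrac12}$, applied with $x=p/2$, then gives $\tau_p^p \le \sqrt{\pi(p+1)/2} \le \sqrt{\pi(p+3)/2}$, so $\tau_p\le(\pi(p+3)/2)^{1/(2p)}$. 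Combining these two estimates yields the theorem.

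The hardest step is the first one: realising $Y^\theta$ as \emph{genuinely conformal} with the sharp subordination $d\langle Y^\theta\rangle \le d\langle X\rangle$ (instead of the factor $4$ of the original Ba\~nuelos--Wang construction, or the factor $2$ of \cite{BaMH}). This requires carefully synchronising the rotational decomposition of the Beurling--Ahlfors kernel with the It\^o stochastic calculus on space-time Brownian motion, so that the angular weight $|\cos\theta|$ is placed in the integral kernel rather than inside the quadratic-variation bound, and so that $Y^\theta$ retains the conformal structure needed for Theorem~\ref{MAINThm}(1). Only when this synchronisation is carried out can the $\sqrt 2$ factor of \cite{DV2} be eliminated, and the improved leading constant $1/Q\approx 1.3922$ in place of $\sqrt 2\approx 1.414$ be achieved.
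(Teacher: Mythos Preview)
Your estimates on the two factors in the last part are fine and match the paper's (you use a Gautschi-type bound on $\tau_p$ where the paper uses Wallis plus monotonicity, but either works). The gap is in the first half, the reduction to the inequality $\|B\|_p\le \tau_p\cdot z_{p'}/(1-z_{p'})$.

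Your proposed route --- a rotational decomposition $Bf=c\int_0^{2\pi}e^{-2i\theta}T_\theta f\,d\theta$ together with a conformal martingale representation of each $T_\theta$ with $d\langle Y^\theta\rangle\le d\langle X\rangle$, followed by a ``H\"older in $\theta$'' step producing $\tau_p$ --- does not go through as stated. First, for a \emph{complex} test function $f$ the Ba\~nuelos--M\'endez/Ba\~nuelos--Wang martingale $A^*\star f$ is conformal but only satisfies $d\langle A^*\star f\rangle\le 4\,d\langle I\star f\rangle$ (equivalently $\|A^*\|=2$); the constant drops to $2$ exactly when $f$ is \emph{real}. There is no known way to ``absorb the $|\cos\theta|$ into the kernel'' so as to get subordination constant $1$ while keeping $Y^\theta$ conformal; you acknowledge this is the hardest step and leave it open. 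Second, even granting such a decomposition, an $L^{p'}$-Minkowski or H\"older bound on $\int_0^{2\pi}\|T_\theta f\|_{p'}\,d\theta$ does not produce the factor $\tau_p=\bigl(\frac1{2\pi}\int|\cos\theta|^p\bigr)^{-1/p}$: that factor comes from a pointwise identity at exponent $p$, not from an integral inequality at exponent $p'$. Your accounting of the two $\sqrt{2}$'s and of the exponent in $\tau$ is correspondingly tangled.

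The paper's argument avoids all of this by separating the two ingredients cleanly. One first applies the Dragi\v{c}evi\'c--Volberg symmetry lemma \emph{at exponent $p$} (a pointwise rotation trick, no martingales): $\|B\|_p\le \tau_p\sup_{\|\varphi\|_p=1}\|\re(B\varphi)\|_p$. Then a short duality computation (Lemma~\ref{asym_lemma}) shows $\|\re(B\varphi)\|_p\le \|B\|_{L^{p'}(\bC,\bR)}\|\varphi\|_p$, where the right-hand norm is the operator norm of $B$ restricted to \emph{real-valued} $\psi\in L^{p'}$. For such real $\psi$ one has $d\langle A^*\star\psi\rangle=2\,d\langle I\star\psi\rangle$, so Theorem~\ref{MAINThm}(1) (with the built-in $1/\sqrt2$) yields $\|A^*\star\psi\|_{p'}\le \dfrac{z_{p'}}{1-z_{p'}}\|\psi\|_{p'}$, hence $\|B\|_{L^{p'}(\bC,\bR)}\le z_{p'}/(1-z_{p'})$. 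Combining gives \eqref{zs04}. The point you are missing is precisely this duality-to-real-functions step: it is what turns the subordination constant from $4$ into $2$ and makes the $\sqrt2$'s cancel, without any delicate synchronisation of rotation kernels with stochastic calculus.
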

This estimate together with \eqref{sz02} gives 
$$
\limsup_{p\to\infty} \frac{\|B\|_p}{p} \approx 1.3922 < \sqrt{2}\,.
$$
Furthermore, since
$$
\Bigl( \frac{1003}{2}\pi \Bigr)^{1/2000}<1.004\text{\ \ and\ \ } 1.4\,Q>1.005,
$$
we have
$$
\|B\|_p<1.4\,p,\qquad p\ge 1000.
$$

%Asymptotic information on $z_p$ and $z_{p'}$ as $p\to\infty$ is available in the literature \cite{GMu}, see also 
%Section~\ref{Asymptotics}; this leads us to the above numerical value. The above asymptotic gives 
% $$
% \|B\|_p \le 1.56 (p^*-1)\,,\,\,\text{and}\,\, \|B\|_p \le 1.4 (p-1)\,,\,\,\text{if}\,\, p\ge 70\,.
% $$
% The first inequality is just the interpolation argument, we interpolate between the large $p$ result obtained from the above asymptotic and $\|B\|_2=1$ and optimize.

To prove Theorem \ref{main_thm}, we use the martingale theory as found in 
\cite{BaWa1,NV1,BaMH} and a symmetry lemma from \cite{DV2}. We review this material in the following subsections.

\subsection{Martingales and the Beurling--Ahlfors transform}
Given a function $\varphi\in L^p(\bC)$, we denote the heat extension to $\bR^3_+$ by the same letter $\varphi$. If $Z(t)$ is the two dimensional Brownian motion starting at $z_0$, then the stochastic process $\varphi(Z(t), T-t) -\varphi(z_0, T)$ is a martingale denoted by
$$ 
(I\star\varphi)(t) = \int_0^t \nabla\varphi(Z(s), T-s)\cdot dZ(s).
$$
Given a $2\times 2$ matrix $A$, we denote the martingale transform of $I\star\varphi$ by the matrix $A$ as
$$
(A\star\varphi)(t) = \int_0^t A\,\nabla\varphi(Z(s), T-s)\cdot dZ(s).
$$
Note that we have hidden the implicit dependence on the starting point $z_0$. Each 
martingale transform $A\star\varphi$ can be projected back to $L^p(\bC)$ through the 
following procedure:
$$ 
T_A\varphi(z) = \lim_{T\to+\infty}\int_{\bC}
\mathbb E^{(z_0,T)}[(A\star\varphi)(T) | Z(T) = z]dm(z_0).
$$
That is, we first average over Brownian motion starting at $(z_0, T)$ and exiting $\bR^3_+$ at $(z, 0)$, then we average over all starting points $z_0$. This gives us the transformed function $T_A\varphi$. In fact, the projected operators correspond to second-order Riesz transforms as follows:
$$ 
A = \begin{pmatrix} a & b \\ c & d \end{pmatrix} \longleftrightarrow T_A = bR_2^2 - aR_1^2 + (c+d)R_1R_2.
$$
A quick calculation in \cite{Ja} shows that the projected singular integral operator has smaller norm than the martingale transform, i.e.
\begin{equation}\label{TAnorm}
\|T_A\varphi\|_p \le \|A\star\varphi\|_p.
\end{equation}
Furthermore, by a crucial theorem of Burkholder (see \cite{Bu2,BaMH}), we have 
\begin{equation}\label{Aphinorm}
\|A\star\varphi\|_p \le \|A\|(p^*-1)\|\varphi\|_p,
\end{equation}
where $\|A\|$ is the matrix norm of $A$. Combining (\ref{TAnorm}) with 
(\ref{Aphinorm}), we obtain the following estimate for the norm of the operator $T_A$:
$$%\begin{equation}\label{TAnorm1}
\|T_A\|_p \le \|A\|(p^*-1).
$$%\end{equation}

Let 
$$
A_1 = \begin{pmatrix} 1 & 0 \\ 0 & -1 \end{pmatrix}, \quad
A_2 = \begin{pmatrix} 0 & 1 \\  1 & 0 \end{pmatrix},
$$
and 
$$
A^* = A_1 + i A_2 = \begin{pmatrix} 1 &  i \\  i & -1 \end{pmatrix}.
$$
These matrices correspond to the operators
$$
A_1\leftrightarrow T_1 = R_2^2-R_1^2,\quad 
A_2\leftrightarrow T_2 = -2R_1R_2, \quad A^* \leftrightarrow B.
$$ 
It is easy to see that 
\begin{align}
\|T_j\|_p &\le p^*-1, \label{norms} \\ 
\|B\|_p  &\le 2(p^*-1).\notag
\end{align}
It is proved in \cite{GMSS} that $\|T_j\|_p \geq p^*-1$ as well, hence equality holds
in \eqref{norms}. 
%As already stated, the present best estimate for $\|B\|_p$ is $\sqrt{2(p^2-p)}$ using some internal symmetries of $A^*\varphi$. 

\subsection{The real part of $B\varphi$}
If $\varphi = \varphi_1 + i \varphi_2$, then the real part of $B\varphi$ is 
$$
\re(B\varphi) = T_1\varphi_1 - T_2\varphi_2.
$$
The above arguments applied here give 
\begin{equation}\label{root2}
\|\textrm{Re}(B\varphi)\|_p \leq \sqrt{2}(p^*-1)\|\varphi\|_p.
\end{equation}
Next we give the following important lemma of Dragicevic and Volberg \cite{DV2}; this is also crucial for our proof of the new asymptotic estimate on $\|B\|_p$. 

\begin{lemma}\label{DVlemma}
$\|B\|_p \leq \tau_p\sup_{\|\varphi\|_p=1}\|\textrm{Re}(B\varphi)\|_p$.
\end{lemma}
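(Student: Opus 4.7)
The plan is to exploit the $\mathbb C$\nobreakdash-linearity of $B$ together with a simple averaging identity that writes $|w|^p$ as an integral of $|\re(e^{i\theta}w)|^p$ against the uniform measure on the circle. The constant $\tau_p$ appearing in the lemma is precisely the normalization that makes this identity work, so the rest should be a short computation.

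First I would record the pointwise identity: for every $w\in\bC$, writing $w=|w|e^{i\phi}$ and using $\re(e^{i\theta}w)=|w|\cos(\theta+\phi)$ together with the translation invariance of the integral $\int_0^{2\pi}|\cos\alpha|^p\,d\alpha$, one obtains
$$
\frac{1}{2\pi}\int_0^{2\pi}|\re(e^{i\theta}w)|^p\,d\theta=|w|^p\cdot\frac{1}{2\pi}\int_0^{2\pi}|\cos\theta|^p\,d\theta=\tau_p^{-p}|w|^p,
$$
i.e.\ $|w|^p=\tau_p^p\cdot\frac{1}{2\pi}\int_0^{2\pi}|\re(e^{i\theta}w)|^p\,d\theta$. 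Applied pointwise with $w=B\varphi(z)$ and integrated over $z\in\bC$ (with Fubini), this yields
$$
\|B\varphi\|_p^p=\tau_p^p\cdot\frac{1}{2\pi}\int_0^{2\pi}\|\re(e^{i\theta}B\varphi)\|_p^p\,d\theta.
$$

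Next I would use that $B$ is $\bC$\nobreakdash-linear (it acts as multiplication by $\bar\xi/\xi$ on the Fourier side, equivalently by the kernel $1/(\pi(z-w)^2)$). Hence $e^{i\theta}B\varphi=B(e^{i\theta}\varphi)$, and since $|e^{i\theta}\varphi|=|\varphi|$ we have $\|e^{i\theta}\varphi\|_p=\|\varphi\|_p$. Substituting and bounding each integrand by the supremum,
$$
\|B\varphi\|_p^p\le\tau_p^p\Bigl(\sup_{\|\psi\|_p=\|\varphi\|_p}\|\re(B\psi)\|_p\Bigr)^p.
$$
Normalizing $\|\varphi\|_p=1$ and taking the supremum over such $\varphi$ gives the claimed inequality
$\|B\|_p\le\tau_p\sup_{\|\psi\|_p=1}\|\re(B\psi)\|_p$.

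I do not anticipate any real obstacle: the only points that need a moment of care are the applicability of Fubini (which is fine because $B\varphi\in L^p(\bC)$ and the outer measure on $[0,2\pi]$ is finite) and the $\bC$\nobreakdash-linearity of $B$, which is immediate from either its singular integral representation or its Fourier multiplier symbol. The rest is the averaging identity for $|w|^p$, which is the very reason the constant $\tau_p$ is defined the way it is.
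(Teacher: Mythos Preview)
Your proof is correct and follows essentially the same approach as the paper's: both use the averaging identity $|w|^p=\tau_p^{p}\cdot\frac{1}{2\pi}\int_0^{2\pi}|\re(e^{i\theta}w)|^p\,d\theta$ applied to $w=B\varphi(z)$, integrate in $z$ via Fubini, and then use $e^{i\theta}B\varphi=B(e^{i\theta}\varphi)$ with $\|e^{i\theta}\varphi\|_p=\|\varphi\|_p$. The only cosmetic difference is that the paper notes the $\theta$-average is actually constant in $\theta$ (so one gets an equality before taking the sup), whereas you bound each term by the supremum directly; either yields the stated inequality.
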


Observe that if we combine this estimate with \eqref{root2}, we obtain \eqref{DrVo1}.

\begin{proof}
Let $B_\theta$ denote the operator $e^{-i\theta}B$, $\theta\in [0, 2\pi)$. For any $z\in\bC$, observe that  
\begin{eqnarray*}
\textrm{Re}(B_\theta\varphi)(z) &=&  \textrm{Re}(B\varphi)(z)\cos\theta + \textrm{Im}(B\varphi)(z)\sin\theta \\
&=& |B\varphi(z)|\cos(\theta-\delta(z)),
\end{eqnarray*}
for some angle $\delta(z)$ depending on $z$. Taking the absolute value, raising to the $p$-th power and averaging over $\theta$ gives then 
$$
\frac{1}{2\pi}\int_0^{2\pi} |\textrm{Re}(B_\theta\varphi)(z)|^pd\theta = |B\varphi(z)|^p \tau_p^{-p}.
$$
Now integrate both sides with respect to $z$ to get
$$%\begin{equation}\label{equality}
\tau_p^{-p}\|B\varphi\|_p^p = \frac{1}{2\pi}\int_0^{2\pi} \|\textrm{Re}(B_\theta\varphi)\|_p^p d\theta.
$$%\end{equation}
Since $B_\theta\varphi = B(e^{-i\theta}\varphi)$, it is clear that the norm-function 
$$%\begin{equation}
\sup_{\|\varphi\|_p=1}\|\textrm{Re}(B_\theta\varphi)\|_p = \sup_{\|\varphi\|_p=1} \|\textrm{Re}(B\varphi)\cos\theta + \textrm{Im}(B\varphi)\sin\theta\|_p
$$%\end{equation}
is constant in $\theta$. Thus we have
\begin{eqnarray*}
\|B\|_p^p &=& \sup_{\|\varphi\|_p=1}\|B\varphi\|_p^p = \frac{\tau_p^p}{2\pi}\sup_{\|\varphi\|_p=1}\int_0^{2\pi}\|\textrm{Re}(B_\theta\varphi)\|_p^p d\theta \\
&\leq& \frac{\tau_p^p}{2\pi}\int_0^{2\pi}\sup_{\|\varphi\|_p=1}\|\textrm{Re}(B_\theta\varphi)\|_p^p d\theta \\
&=& \tau_p^p\sup_{\|\varphi\|_p=1}\|\textrm{Re}(B\varphi)\|_p^p.
\end{eqnarray*}
\end{proof}

\subsection{The use of conformality}
It is stated in \cite{BaWa1} and shown in \cite{BaJ1} that the martingale
$$%\begin{equation}
A^*\star\varphi 
= (A_1\star\varphi_1 - A_2\star\varphi_2) + i (A_2\star\varphi_1 + A_1\star\varphi_2)
$$%\end{equation}
is a conformal martingale. 
%This means that if we write 
%$Y_j = \int_0^t \nabla Y_j(s)\cdot dZ(s)$, then we have always
%$$ 
%d\langle Y_1\rangle = |\nabla Y_1|^2 = |\nabla Y_2|^2 = d\langle Y_2\rangle, \qquad
%d\langle Y_1, Y_2\rangle = \nabla Y_1\cdot \nabla Y_2 = 0.
%$$ 
%That is, the quadratic variation processes of the coordinate martingales are equal and the mutual covariation is $0$. 
Using this property, Ba\~nuelos and Janakiraman  \cite{BaJ1} establish that for $2\le p<\infty$,
$$%\begin{equation}\label{A*norm}
\left\|\frac{A^*\star\varphi}{2}\right\|_p \le \sqrt{\frac{p^2-p}{2}}\|\varphi\|_p,
$$%\end{equation}
which then leads them to \eqref{Beurest} and \eqref{best-est}. 
%We wish to identify this setting for future reference.

%\subsubsection{The general problem}
%The general result in \cite{BaJa} is that if $Y$ is a conformal martingale that is differentially subordinate to an arbitrary martingale $X$, i.e. $d\langle Y\rangle \le d\langle X\rangle$, then we have 
%\[\|Y\|_p \le \sqrt{\frac{p^2-p}{2}} \|X\|_p, \hspace{3mm} 2\le p<\infty.\]
%This however is not expected to be the best possible constant. This result also raises the question as to what will be the best constant if $X$ is conformal, or if $X$ and $Y$ are both conformal, and when $1<p'<2$. Two of these cases are resolved in Theorem\ref{???}.

%It is no surprise that the proof of our Theorem \ref{main_thm} is related to this theorem. We are now ready to prove it.

\subsection{Proof of Theorem~\ref{main_thm}%, estimate (\ref{main_result1})
}

Here we use the norm $\|B\|_{L^{p'}(\bC,\bR)}$ of the operator $B$ 
on the space of $L^{p'}$ integrable functions defined on $\mathbb C$, 
restricted to real valued functions.

\begin{lemma}\label{asym_lemma}
Let $1<p<\infty$. For $\varphi\in L^p(\bC)$, we have
$$
\|\re(B\varphi)\|_p \le \|B\|_{L^{p'}(\bC,\bR)}\|\varphi\|_p.
$$
\end{lemma}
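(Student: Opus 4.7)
The plan is to use a straightforward duality argument based on the fact that the Beurling--Ahlfors transform is symmetric under the bilinear pairing $\langle f,g\rangle=\int_{\bC}fg\,dm$ (as opposed to the sesquilinear one), which follows from the obvious symmetry of its kernel $\frac{1}{\pi(z-w)^2}$ in $z$ and $w$. Concretely, I will write
\begin{equation*}
\|\re(B\varphi)\|_{p}=\sup_{\psi\in L^{p'}(\bC,\bR),\,\|\psi\|_{p'}=1}\int_{\bC}\re(B\varphi)(z)\,\psi(z)\,dm(z),
\end{equation*}
where the supremum ranges over \emph{real-valued} test functions $\psi$ of unit $L^{p'}$ norm. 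This step is the standard duality $(L^p(\bC,\bR))^{*}=L^{p'}(\bC,\bR)$, and it is essential that the test functions be real so that the norm on the right is exactly the one appearing in the statement.

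Next, since $\psi$ is real, $\int\re(B\varphi)\,\psi\,dm=\re\int(B\varphi)\,\psi\,dm$, and by the kernel symmetry
\begin{equation*}
\int_{\bC}(B\varphi)(z)\,\psi(z)\,dm(z)=\int_{\bC}\varphi(w)\,(B\psi)(w)\,dm(w).
\end{equation*}
Then I apply H\"older's inequality for complex-valued functions:
\begin{equation*}
\Bigl|\re\int_{\bC}\varphi\,(B\psi)\,dm\Bigr|\le \|\varphi\|_{p}\,\|B\psi\|_{p'}.
\end{equation*}
Finally, because $\psi$ is real with $\|\psi\|_{p'}=1$, the very definition of the restricted operator norm gives $\|B\psi\|_{p'}\le \|B\|_{L^{p'}(\bC,\bR)}$. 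Taking the supremum over $\psi$ completes the proof.

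I do not anticipate a real obstacle here; the only subtlety worth flagging is the consistent use of the bilinear (rather than Hermitian) pairing, since $B$ would not be symmetric under the sesquilinear one. The content of the lemma is precisely that the true ``cost'' of passing from $\re(B\varphi)$ to $B\varphi$ on the dual side is that $B$ only needs to be controlled on real inputs, which is exactly what will be exploited later (together with the conformality of $A^{*}\star\psi$ for real $\psi$) to improve the asymptotic constant below $\sqrt{2}$.
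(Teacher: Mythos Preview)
Your proof is correct and follows essentially the same duality argument as the paper. The only cosmetic difference is that the paper decomposes $\re(B\varphi)=T_1\varphi_1-T_2\varphi_2$ and moves the real self-adjoint pieces $T_1,T_2$ across the pairing separately, whereas you move $B$ across in one step via the bilinear (kernel) symmetry; unwinding the paper's computation one finds exactly your identity $\int\re(B\varphi)\,\psi=\re\int\varphi\,(B\psi)$.
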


\begin{proof}
%Assume $\theta =0$ without loss of generality. 
In the following, $\psi$ always denotes real valued functions.
\begin{gather*}
\|\re(B\varphi)\|_p = \sup_{\|\psi\|_{p'}=1}\int (T_1\varphi_1 - T_2\varphi_2)\psi 
\\=\sup_{\|\psi\|_{p'}=1}\int (\varphi_1 T_1\psi -\varphi_2 T_2\psi)  
= \sup_{\|\psi\|_{p'}=1}\int (\varphi_1,\varphi_2)\cdot (T_1\psi, -T_2\psi) 
\\ \le \|\varphi\|_p \sup_{\|\psi\|_{p'}=1}\|B\psi\|_{p'}  
= \|\varphi\|_p\|B\|_{L^{p'}(\bC,\bR)}. 
\end{gather*}
\end{proof}

If $\psi$ is real valued, then $A^*\star\psi$ is a conformal martingale with quadratic variation $d\langle A^*\star\psi\rangle = 2\,d\langle I\star\psi\rangle$. 
Hence by \eqref{TAnorm} and by Theorem~\ref{MAINThm} we have 
$$
\|B\|_{L^{p'}(\bC, \bR)} \le \sup_{\|\psi\|_{p'}=1}\|A^*\star \psi\|_{p'} \le \frac{z_{p'}}{1-z_{p'}}.
$$
By Lemmas \ref{DVlemma} and \ref{asym_lemma}, we get 
\begin{equation}\label{zs04}
\|B\|_p \le \tau_p \|B\|_{L^{p'}(\bC,\bR)}\le\tau_p\frac{z_{p'}}{1-z_{p'}}.
\end{equation}
It remains to use Wallis' formula:
$$
\tau^{-2n}_{2n}=\frac{1}{2\pi}\int_0^{2\pi} |\cos(\theta)|^{2n}d\theta=
\Bigl(\frac{1\cdot3\cdot\ldots\cdot (2n-1)}{2\cdot4\cdot\ldots\cdot (2n)}\Bigr)^2>
\frac{2}{(2n+1)\pi}.
$$
By monotonicity of $\frac{1}{2\pi}\int_0^{2\pi} |\cos(\theta)|^{p}d\theta$ we obtain 
$$
\frac{1}{2\pi}\int_0^{2\pi} |\cos(\theta)|^{p}d\theta>\frac{2}{(p+3)\pi},
$$
and hence
$$
\tau_p<\Bigl(\frac{p+3}{2}\pi\Bigr)^{1/(2p)};
$$
this together with \eqref{zs04} and \eqref{sz03} proves Theorem~\ref{main_thm}.

\markboth{}{\sc \hfill \underline{References}\qquad}


\begin{thebibliography}{XX}
\label{rf}

\bibitem{Ast}{\sc  K.~Astala}, {\em Area Distortion of Quasiconformal Mappings}, Acta Math. {\bf 173} (1994) 37--60.

\bibitem{BaMS}{\sc A.~Baernstein and S.~Montgomry-Smith}, 
{\em Some conjectures about integral means of $\partial f$ and
$\overline\partial f$}, Complex analysis and Differential
Equations, Proc. of the Marcus Wallenberg symposium in honor of
Matts Ess\'{e}n, Uppsala, Sweden, 1997, 92--109.

\bibitem{Ban} {\sc R.~Ba\~nuelos}, {\em The foundational inequalities of D.L. Burkholder and some of their ramifications}, To appear, Illinois Journal of Mathematics, Volume in honor of D.L. Burkholder.

\bibitem{BaJ1}{\sc R.~Ba\~nuelos, P.~Janakiraman},
{\em $L^p$--bounds for the  Beurling--Ahlfors transform}, 
Trans. Amer. Math. Soc. {\bf 360} (2008) 3603--3612.

\bibitem{BaMH}{\sc R.~Ba\~nuelos, 
P.~J.~M\'endez-Hern\'andez}, {\it Space-time Brownian
motion and the Beurling--Ahlfors transform}, 
Indiana Univ. Math. J. {\bf 52} (2003) 981--990.

\bibitem{BO}{\sc R.~Ba\~nuelos, A. Osekowski}, {\it Burkholder inequalities for submartingales, Bessel processes and conformal martingales}, Preprint, 2011, pp. 1--20.

\bibitem{BaWa1}{\sc R.~Ba\~nuelos, G.~Wang}, 
{\em Sharp inequalities for martingales with applications 
to the Beurling--Ahlfors and Riesz transforms}, 
Duke Math. J. {\bf 80} (1995) 575--600.

\bibitem{BJV}{\sc A.~Borichev, P.~Janakiraman, 
A.~Volberg}, {\em On Burkholder function for orthogonal 
martingales and zeros of Legendre polynomials}, to appear in Amer. J. Math.

\bibitem{Bu1}{\sc D.~Burkholder}, {\em Boundary value 
problems and sharp estimates for the martingale transforms}, 
Ann. of Prob. {\bf 12} (1984) 647--702.
 
\bibitem{Bu2}{\sc D.~Burkholder}, {\em An extension of 
classical martingale inequality}, Probability Theory and 
Harmonic Analysis, ed. by J.-A.~Chao and W.~A.~Woyczynski, 
Marcel Dekker, 1986.

\bibitem{Bu3}{\sc D.~Burkholder}, {\em Sharp inequalities 
for martingales and stochastic integrals}, Colloque Paul L\'evy 
sur les Processus Stochastiques (Palaiseau, 1987), Ast\'erisque 
No.~157--158 (1988) 75--94.

\bibitem{Bu7}{\sc D.~Burkholder}, {\em A proof of the Peczynski's conjecture for the Haar system}, Studia Math. {\bf 91} (1988) 79--83.

\bibitem{Bu4}{\sc D.~Burkholder}, {\em Differential 
subordination of harmonic functions and martingales}, 
(El Escorial 1987) Lecture Notes in Math.\ {\bf 1384} (1989) 1--23.
 
\bibitem{Bu5}{\sc D.~Burkholder}, {\em Explorations of 
martingale theory and its applications}, Lecture Notes in Math.\  
{\bf 1464} (1991) 1--66.
 
\bibitem{Bu6}{\sc D.~Burkholder}, {\em Strong differential 
subordination and stochastic integration}, Ann. of Prob. 
{\bf 22} (1994) 995--1025.

\bibitem{Bu8}{\sc D.~Burkholder}, {\em Martingales and 
Singular Integrals in Banach spaces}, Handbook of the Geometry 
of Banach Spaces, Vol. 1, Ch. 6., (2001) 233--269.

\bibitem{C}{\sc E.~A.~Coddington}, {\em An introduction to ordinary differential operators}, New York, Dover, 1989.

\bibitem{CL}{\sc E.~A.~Coddington, N.~Levinson}, {\em The 
theory of ordinary differential operators}, New York, 
Mc-Graw--Hill, 1955.

\bibitem{DTV}{\sc O.~Dragicevic, S.~Treil, A.~Volberg},
{\em A lemma about $3$ quadratic forms}, Intern. Math. Research 
Notices, 2008.

\bibitem{DV1}{\sc O.~Dragicevic, A.~Volberg}  
{\em Sharp estimates of the Ahlfors--Beurling operator via 
averaging of martingale transform}, Michigan Math. J. {\bf 51} (2003) 415--435.

\bibitem{DV2}{\sc O.~Dragicevic, A.~Volberg}, {\em Bellman 
function, Littlewood--Paley estimates, and asymptotics of the 
Ahlfors--Beurling operator in $L^p(\mathbb{C})$,
$p\rightarrow \infty$}, Indiana Univ. Math. J. {\bf 54} (2005) 971--995.

\bibitem{DV3}{\sc O.~Dragicevic, A.~Volberg}, {\em Bellman 
function and dimensionless estimates of classical and 
Ornstein--Uhlenbeck Riesz transforms}, J. of Oper. Theory  
{\bf 56} (2006) 167--198.

\bibitem{Du}{\sc J.~Duoandikoetxea}, {\em Fourier analysis}, Graduate Studies in Mathematics, vol. 29, American Mathematical Society, Providence, RI, 2001.

\bibitem{GMSS}{\sc S.~Geiss, S.~Montgomery-Smith, 
E.~Saksman}, {On singular integral and martingale transforms}, 
Trans. Amer. Math. Soc. 362 (2010) 553--575.

%\bibitem{GMu}{\sc D.P. Gupta, M.E. Muldoon} {\em Inequalities for the smallest zeros of Laguerre polynomials and their q-analogues}, Journal of Inequalities in Pure and Applied Mathematics, {\bf 8} (2007) no. 1, Article 24. 


\bibitem{Iw}{\sc T.~Iwaniec}, {\em Extremal inequalities in 
Sobolev spaces and quasiconformal mappings}, 
Z. Anal. Anwendungen {\bf 1} (1982) 1--16.

\bibitem{Ja}{\sc P.~Janakiraman}, {\em Orthogonality in martingale spaces and connections to the Beurling--Ahlfors transform}, preprint.


\bibitem{StInt}{\sc I.~Karatzas, S.~Shreve}, Brownian 
motion and stochastic calculus, Graduate Texts in Mathematics, 
Springer, 1991.

\bibitem{K}{\sc N.~Krylov}, {\em Optimal control of diffusion 
processes}, Springer, 1980.

\bibitem{Le}{\sc O.~Lehto}, {\em Remarks on the integrability of the derivatives of quasiconformal mappings}, Ann. Acad. Sci. Fenn. Series A I Math. {\bf 371} (1965), 8 pp.

\bibitem{NT}{\sc F.~Nazarov, S.~Treil}, {\em The hunt for a 
Bellman function: applications to estimates of singular integral 
operators and to other classical problems in harmonic analysis},   
St. Petersburg Math. J. {\bf 8} (1997) 721--824.

\bibitem{NTV4}{\sc F.~Nazarov, S.~Treil, A.~Volberg}, 
{\em The Bellman functions and two-weight inequalities for Haar 
multipliers}, J. of Amer. Math. Soc. {\bf 12} (1999) 909--928.

\bibitem{NTV8}{\sc F.~Nazarov, S.~Treil, A.~Volberg},
{\em Bellman function in stochastic control and harmonic 
analysis}, Systems, approximation, singular integral operators, 
and related topics (Bordeaux, 2000), 393--423, Oper. Theory Adv. 
Appl., 129, Birkhauser, Basel, 2001.

\bibitem{NV}{\sc F.~Nazarov and A.~Volberg}, {\em Bellman
function, two weighted Hilbert transforms and embeddings of the
model spaces $K_\theta$}, Dedicated to the memory of 
Thomas H.~Wolff, J. Anal. Math. {\bf 87} (2002) 385--414.

\bibitem{NV1}{\sc F.~Nazarov and A.~Volberg}, 
{\em Heating of the Ahlfors--Beurling operator and estimates 
of its norm}, St. Petersburg Math. J. {\bf 15} (2004) 563--573.

\bibitem{P}{\sc S.~Petermichl}, {\em A sharp bound for weighted 
Hilbert transform in terms of classical $A_p$ characteristic},  
Amer. J. Math. {\bf 129} (2007) 1355--1375.

\bibitem{PV}{\sc S.~Petermichl, A.~Volberg}, {\em Heating 
the Beurling operator: weakly quasiregular maps on the plane are 
quasiregular}, Duke Math. J. {\bf 112} (2002) 281--305.

\bibitem{Pi}{\sc S.~Pichorides}, {\em On the best values of 
the constants in the theorems of M.~Riesz, Zygmund, and 
Kolmogorov}, Studia Math. {\bf 44} (1972) 165--179.

\bibitem{Pog}{\sc A.~V.~Pogorelov}, {\em Extrinsic geometry of 
convex surfaces}. Translations of Mathematical Monographs, 
Amer. Math. Soc., v.~35, 1973.

\bibitem{RoWi}{\sc L.~C.~G.~Rogers, D.~Williams}, 
{\em Diffusions, Markov Processes and Martingales, Vol 1, 2.}, 
Cambridge University Press, 2000.

\bibitem{Saw2}{\sc E.~Sawyer}, {\em Two--weight norm 
inequalities for certain maximal and integral operators}, 
Lecture Notes Math.\ {\bf 908}, Springer-Verlag, 
Berlin--Heidelberg--New York, 1982, 102--127.

\bibitem{SlSt}{\sc L.~Slavin, A.~Stokolos}, {\em The Bellman 
PDE for the dyadic maximal function and its solution}, Preprint, 
2006.

\bibitem{SV}{\sc L.~Slavin, V.~Vasyunin}, {\em Sharp results in 
the integral-form John--Nirenberg inequality}, Trans. Amer. 
Math. Soc. {\bf 363} (2011) 4135--4169.

\bibitem{SZE}{\sc G.~Szeg\"o}, {\it Orthogonal polynomials}, 
Fourth edition. American Mathematical Society, Colloquium 
Publications, Vol. XXIII, American Mathematical Society, 
Providence, R.I., 1975.

\bibitem{VaVo1}{\sc V.~Vasyunin, A.~Volberg}, 
{\em The Bellman function for certain two weight inequality: 
the case study}, St. Petersburg Math. J. {\bf 18} (2007) 201--222.

\bibitem{VaVo2}{\sc V.~Vasyunin, A.~Volberg}, 
{\em Monge--Ampère equation and Bellman optimization of Carleson
Embedding Theorems}, arXiv:0803.2247. Advances in Math. 
Sciences, Ser. 2, v. 226, (2009), pp. 195--238.
Amer. Math. Soc. Translations.

\bibitem{VaVo}{\sc V.~Vasyunin, A.~Volberg}, 
{\em Bellman Functions Technique in Harmonic Analysis}, preprint, 2009, pp. 1--86. sashavolberg.wordpress.com

\bibitem{VoEcole}{\sc A.~Volberg}, {\em Bellman 
approach to some problems in Harmonic Analysis}, 
S\'eminaires des Equations aux deriv\'ees partielles, 
Ecole Polit\'echnique, 2002, expos\'e XX, pp. 1--14.

\bibitem{WAT}{\sc G.~N.~Watson}, {\it A treatise on the theory of Bessel functions}, Reprint of the second (1944) edition, Cambridge Mathematical Library, Cambridge University Press, Cambridge, 1995.  
\end{thebibliography}
\end{document}